\newtheorem{theorem}{Theorem}[section]
\newtheorem{lemma}[theorem]{Lemma}
\newtheorem{proposition}[theorem]{Proposition}
\newtheorem{definition}[theorem]{Definition\rm}
\newtheorem{remark}{\it Remark\/}
\newcommand*{\R}{\ensuremath{\mathbb{R}}}
\newcommand*{\M}{\ensuremath{\gamma}}
\newcommand*{\D}{\ensuremath{\mathscr{D}}}
\renewcommand*{\O}{\ensuremath{\mathscr{U}}}
\renewcommand*{\S}{\ensuremath{\mathcal{S}}}
\newcommand*{\T}{\ensuremath{\mathbb{T}}}
\newcommand*{\dist}{\ensuremath{\mathrm{dist\,}}}
\renewcommand*{\div}{\ensuremath{\mathrm{div\,}}}
\newcommand*{\curl}{\ensuremath{\mathrm{curl\,}}}
\newcommand{\eps}{\varepsilon}
\begin{document}

\title{Relaxation of the Incompressible Porous Media Equation}

\author{L\'aszl\'o Sz\'ekelyhidi Jr.}
\address{Institut f\"ur Angewandte Mathematik, Universit\"at Bonn, Endenicher Allee 62, D-53115 Bonn}
\email{szekely@hcm.uni-bonn.de}

\thanks{The author was supported by the Hausdorff Center for Mathematics in Bonn. Part of the research took place in Madrid during the program "Calculus of Variations, Singular Integrals and Incompressible Flows". The author would like to thank D.~C{\'o}rdoba, D.~Faraco and F.~Gancedo for several very interesting conversations during this meeting, as well as acknowledge financial support and warm hospitality of UAM and ICMAT}

\begin{abstract}
It was shown recently by Cordoba, Faraco and Gancedo in \cite{CFG} that the 2D porous media equation admits weak solutions 
with compact support in time. The proof, based on the convex integration framework developed for the incompressible Euler equations in \cite{euler1}, uses ideas from the theory of laminates, in particular $T4$ configurations. In this note we calculate the explicit relaxation of IPM, thus avoiding $T4$ configurations. We then use this to construct weak solutions to the unstable interface problem (the Muskat problem), as a byproduct shedding new light on the gradient flow approach introduced 
by Otto in \cite{Otto1}.
\end{abstract}

\maketitle

\section{Introduction}

We consider the incompressible porous media equation (IPM) in a 2-dimensional bounded domain $\Omega\subset\R^2$. The flow is described in Eulerian coordinates by a velocity field $v(x,t)$ and a pressure $p(x,t)$ obeying the conservation of mass and the conservation of momentum in the form of Darcy's law:
\begin{align}
\partial_t\rho+\div(\rho v)&=0,\label{e:IPM1}\tag{IPM1}\\
\div v&=0,\label{e:IPM2}\tag{IPM2}\\
v+\nabla p&=-(0,\rho).\label{e:IPM3}\tag{IPM3}
\end{align}
Here we chose $x_1$ as the horizontal and $x_2$ as the vertical direction, with the gravitational constant normalized to be $1$. Equation \eqref{e:IPM2} amounts to prescibing the flow to be incompressible, and this is coupled with the assumption that there is no flux across the boundary $\partial\Omega$, i.e. $v\cdot \nu=0$ on $\partial\Omega$. 

As usual, a weak solution to the system \eqref{e:IPM1}-\eqref{e:IPM3} with initial data $\rho_0\in L^{\infty}(\Omega)$ is defined as
a pair $(\rho,v)$ with
$$
\rho\in L^{\infty}(\Omega\times (0,T)),\quad v\in L^{\infty}(0,T;L^2(\Omega)),
$$
such that for all $\phi\in C^{\infty}_c(\R^2\times\R)$ we have
\begin{eqnarray}
\int_0^T\int_{\Omega}\rho(\partial_t\phi+v\cdot \nabla \phi)\,dxdt+\int_{\Omega}\rho_0(x)\phi(x,0)\,dx&=&0,\label{e:wIPM1}\\
\int_0^T\int_{\Omega}v\cdot\nabla\phi\,dxdt&=&0\label{e:wIPM2},
\end{eqnarray}
and for all $\psi\in C^{\infty}_c(\Omega)$
\begin{equation}\label{e:wIPM3}
\int_{\Omega}(v+(0,\rho))\cdot\nabla^{\perp}\psi\,dx=0.
\end{equation}
We remark explicitly that \eqref{e:wIPM2} includes the no-flux boundary condition for $v$ whereas in \eqref{e:wIPM3} the pressure $p$ has been eliminated (observe also that there is no boundary condition on $p$). 

Recently, D.~C{\'o}rdoba, D.~Faraco and F.~Gancedo showed in \cite{CFG}, that on the 2-dimensional torus $\T^2$ the system \eqref{e:IPM1}-\eqref{e:IPM3} admits 
nontrivial weak solutions $(\rho,v)\in L^{\infty}(\T^2\times \R)$ with compact support in time. 
The proof is based on the framework developed in \cite{euler1} for the incompressible Euler equations, although 
there are several places where the authors had to modify the arguments. In technical terms, one of the crucial points in the general scheme of convex integration is to show that the relaxation with respect to the wave cone of a suitably defined 
constitutive set, the $\Lambda$-convex hull, contains the zero state in its interior. In \cite{CFG} it was observed, that due to a lack of symmetry induced by the direction of gravity, this condition seems to fail for IPM; instead, a systematic method for obtaining a suitably modified constitutive set was introduced, based on so-called degenerate $T4$ configurations. The advantage of the method used in \cite{CFG} is that it is rather robust, and can be used in situations where an explicit calculation of $K^\Lambda$ is out of reach due to the high complexity (see also \cite{KMS,MS}). Indeed, the same technique has been recently applied successfully to a large class of active scalar equations by R.~Shvydkoy \cite{Shvydkoy}.

On the other hand, there are certain advantages to obtaining an explicit formula for the $\Lambda$-convex hull $K^{\Lambda}$ rather than just showing that a fixed state is in the interior. The explicit formula allows one to identify ''compatible boundary and initial conditions'', for which the construction works. For the incompressible Euler equations, such initial conditions were called "wild initial data" in \cite{euler2}. For IPM a specific example of independent interest is related to the so-called Muskat problem, or 2-phase Hele-Shaw flow. Recall that \eqref{e:IPM1}-\eqref{e:IPM3} can be used to model the flow of two immiscible 
fluids of different densities in a porous medium, or, equivalently, in a Hele-Shaw cell \cite{SaffmanTaylor}. If initially the two fluids form a horizontal interface, with the heavier fluid on top, it is well known that the initial value problem is ill-posed in classical function spaces \cite{Wooding,SiegelCaflischetal,CG}.  Although some explicit solutions are known \cite{Howison}, there is no general existence theory, neither for the 
evolution problem for the interface, nor for weak solutions of IPM. As an application of the explicit formula for the $\Lambda$-convex hull, in this paper we construct weak solutions to IPM with initial data given by the unstable interface.

 After a normalization we may assume that the density $\rho(x,t)$, indicating whether the pores at time $t$ near location $x\in\Omega$ are filled with the lighter or the heavier fluid, takes the values $\pm 1$. Hence, from now on we will concentrate on constructing weak solutions in the sense \eqref{e:wIPM1}-\eqref{e:wIPM3} which satisfy
\begin{equation}\label{e:IPM4}\tag{IPM4}
|\rho(x,t)|=1\quad\textrm{ for a.e.}(x,t)\in \Omega\times(0,T).
\end{equation}
Note that the solutions constructed in \cite{CFG} also satisfy \eqref{e:IPM4}.

Our weak solutions are in good agreement and show interesting connections to predictions concerning the coarse-grained density and the growth of the mixing zone made in \cite{Otto1,Otto2}. In \cite{Otto1} F.~Otto introduced a relaxation approach for \eqref{e:IPM1}-\eqref{e:IPM3}, based on 
a gradient flow formulation of IPM and using ideas from mass transport. It was shown that under certain assumptions there exists a {\it unique} ''relaxed'' solution $\overline{\rho}$, representing a kind of coarse-grained density. Moreover, Otto showed in \cite[Remark 2.1]{Otto2} that, in general, the mixing zone (where the coarse-grained density $\overline{\rho}$ is strictly between $\pm 1$) grows linearly in time, with the possible exception of a small set of volume fraction 
$O(t^{-1/2})$. 
 
In Section \ref{s:micro} we show that the relaxed solution $\overline{\rho}$ from \cite{Otto1} is very closely related to the concept of subsolution, introduced in \cite{euler1} for the incompressible Euler equations, and in particular we construct weak solutions $\rho_k$ such that $\rho_k\overset{*}{\rightharpoonup} \overline{\rho}$. The interpretation is that $\overline{\rho}$ is the coarse-grained density obtained from $\rho_k$. It is interesting to note in this connection, that, although weak solutions are clearly not unique, there is a way to identify a selection criterion among subsolutions which leads to uniqueness. 

Moreover, we show that if the coarse-grained density is independent of the horizontal direction, the linear growth estimate of \cite{Otto2} is sharp, in the sense that there is no exceptional set. As a consequence, we can interpret the uniqueness result of Otto as selecting the subsolution with "maximal mixing". In this light it is of interest to note that the analogous criterion for the incompressible Euler equations would be "maximally dissipating" \cite{Dafermos,GigliOtto,euler2}.

The paper is organized as follows. In Section \ref{s:relaxations} we recall from \cite{CFG} how to reformulate \eqref{e:IPM1}-\eqref{e:IPM3} as a differential inclusion, and then we calculate explicitly the relaxation, more precisely the $\Lambda$-convex hull of the constitutive set. These computations form the main contribution of this paper. If one is only interested in weak solutions as defined in this introduction (where $v$ can be {\it unbounded}), the ''simpler'' computations in Propositions \ref{p:hull1} and \ref{p:perturbation1} suffice. However, for completeness we include the computations that are required for {\it bounded} velocity $v$ in Propositions \ref{p:hull2} and \ref{p:perturbation2}. We also show that the $\Lambda$-convex hulls obtained are stable under weak convergence, in the spirit of compensated compactness \cite{Tartar,DiPerna}.

Then, in Section \ref{s:constructions} we show how the explicit form of the $\Lambda$-convex hull can be used in conjunction with the Baire category method to obtain weak solutions. For the convenience of the reader we include the details of the Baire category method in the Appendix. Finally, in Section \ref{s:micro} we use the framework to construct weak solutions to the unstable interface problem.

\section{The relaxation of IPM}\label{s:relaxations}

We start by setting
\begin{equation}\label{e:u}
u:=2v+(0,\rho).
\end{equation}
Then \eqref{e:IPM1}-\eqref{e:IPM3} can be rewritten as
\begin{equation}\label{e:linear}
\begin{split}
\partial_t\rho+\div m&=0,\\
\div (u-(0,\rho))&=0,\\
\curl (u+(0,\rho))&=0,
\end{split}
\end{equation}
coupled with
\begin{equation}\label{e:constitutive}
\begin{split}
m&=\tfrac{1}{2}(\rho u-(0,1)),\\
|\rho|&=1
\end{split}
\end{equation}
for almost every $(x,t)$. As in \cite{CFG}, we interpret \eqref{e:linear}-\eqref{e:constitutive} 
as a differential inclusion: the {\it state variable} 
$$
(\rho,u,m):\Omega\times(0,T)\to\R\times\R^2\times\R^2
$$
is subject to a linear system of conservation laws \eqref{e:linear}, and should take values 
in a {\it constitutive set} determined by \eqref{e:constitutive}.

\bigskip

The reason for introducing $u$ is purely cosmetic, it makes \eqref{e:linear} symmetric. It also helps in simplifying some of the calculations below. Observe that \eqref{e:linear} can be easily written as a genuine differential inclusion. Namely, we see that
$$
\begin{pmatrix} \rho-u_2&u_1\\u_1&\rho+u_2\end{pmatrix}
$$
is $\curl$-free, hence (locally) equal to $\nabla^2\phi$ for some function $\phi$. Then $\rho=\tfrac{1}{2}\Delta \phi$ and
$\div (\tfrac{1}{2}\partial_t \nabla \phi+m)=0$, hence
$m=-\tfrac{1}{2}\partial_t\nabla \phi + \nabla^\perp\psi$ for some $\psi$. In particular we deduce that {\it for any $(x,t)$-periodic solution 
$$
(\rho,u,m):\T^3\to\R\times\R^2\times\R^2
$$ 
of \eqref{e:linear}, there exist periodic functions 
$$
\phi,\psi:\T^3\to\R
$$ 
such that}
\begin{equation}
\begin{split}
\rho&=\tfrac{1}{2}\Delta \phi,\\
u&=\tfrac{1}{2}\bigl(2\partial_{12}\phi,\partial_{22}\phi-\partial_{11}\phi\bigr),\\
m&=-\tfrac{1}{2}\partial_t\nabla \phi + \nabla^\perp\psi.
\end{split}
\end{equation}
The periodic solutions of \eqref{e:linear} are characterized by the {\it wave cone} $\Lambda$. 
This is easy to calculate, since it is clearly independent of $m$ and the other two equations 
have a div-curl structure. Hence
$$
\Lambda=\{(\rho,u,m):\,|\rho|^2=|u|^2\}.
$$
We conclude the existence of localized plane-waves, an observation which has already been made in \cite{CFG}:

\begin{lemma}\label{l:waves}
Let $\bar{z}=(\bar{\rho},\bar{u},\bar{m})\in\Lambda$. There exists a sequence 
$$
w_j=(\rho_j,u_j,m_j)\in C_c^{\infty}\left(B_1(0)\times (-1,1);\R\times\R^2\times\R^2\right)
$$
 solving \eqref{e:linear} such that
\begin{enumerate}
\item $\dist(w_j,[-\bar{z},\bar{z}])\to 0\textrm{ uniformly}$,
\item $w_j\rightharpoonup 0\textrm{ weakly in }L^2$,
\item $\int\int|w_j|^2\,dxdt\geq C|\bar{z}|^2$.
\end{enumerate}
\end{lemma}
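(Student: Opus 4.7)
The plan is to realize each $\bar z=(\bar\rho,\bar u,\bar m)\in\Lambda$ as the leading-order coefficient of a high-frequency, compactly supported plane wave built from the potential representation displayed just above. As a first step I would look for complex plane wave solutions of \eqref{e:linear} of the form $(\rho,u,m)=\bar z\,e^{iN(\xi\cdot x+\tau t)}$, generated by potentials $\phi=A\,e^{iN(\xi\cdot x+\tau t)}$ and $\psi=-iC\,e^{iN(\xi\cdot x+\tau t)}$. Substituting into the formulas for $\rho$, $u$, $m$ shows that every such plane wave automatically lies in the wave cone (the constraint $|\bar\rho|=|\bar u|$ falls out algebraically) and that, conversely, for any $\bar z\in\Lambda$ with $\bar\rho\ne 0$ one can first pick $\xi\in S^1$ so that the direction of $\bar u/\bar\rho$ matches, and then pick $\tau$ and $C$ so as to match, respectively, the $\xi$- and $\xi^\perp$-components of $\bar m$. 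The degenerate direction $\bar\rho=\bar u=0$ has to be treated separately, by taking $\phi\equiv 0$ and using only a $\nabla^\perp\psi$-wave oriented along $\bar m$.

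To produce the actual sequence I would fix once and for all a real cutoff $\eta\in C_c^\infty(B_1(0)\times(-1,1))$ with $\|\eta\|_{L^2}>0$, set $\theta(x,t)=\xi\cdot x+\tau t$, and define
\[
\phi_N(x,t)=-\frac{2\bar\rho}{N^{2}}\,\eta(x,t)\cos(N\theta),\qquad
\psi_N(x,t)=\frac{\xi^\perp\!\cdot\bar m}{N}\,\eta(x,t)\sin(N\theta).
\]
Declaring $w_j:=(\rho_{N_j},u_{N_j},m_{N_j})$ through the formulas of the excerpt then automatically yields a compactly supported smooth solution of \eqref{e:linear}, because the identities $\partial_t(\tfrac12\Delta\phi)+\div(-\tfrac12\partial_t\nabla\phi+\nabla^\perp\psi)=0$ and analogous ones for the other two equations are just differential identities in $\phi,\psi$. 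The key computation is that every derivative falling on the oscillatory factor contributes a factor of $N$, while every derivative falling on $\eta$ contributes only $O(1)$; since the prefactors of $\phi_N,\psi_N$ are calibrated exactly so that the principal terms are $O(1)$, all remainders are $O(N^{-1})$ in $L^\infty$, and one obtains the expansion
\[
w_N(x,t)=\eta(x,t)\cos(N\theta)\,\bar z+r_N(x,t),\qquad \|r_N\|_{L^\infty}=O(N^{-1}).
\]

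From here I would verify the three conclusions. For (1), since $0\le\eta\le 1$ and $|\cos|\le 1$ the principal term lies in $[-\bar z,\bar z]$, so $\dist(w_N,[-\bar z,\bar z])\le\|r_N\|_{L^\infty}\to 0$ uniformly. For (2), the principal term tends to zero weakly in $L^2$ by Riemann--Lebesgue (the cutoff is fixed while the frequency diverges), and the remainder tends to zero in $L^2$ strongly. For (3), a direct computation gives $\int|w_N|^2\,dxdt=|\bar z|^2\int\eta^2\cos^2(N\theta)\,dxdt+o(1)\to\tfrac12|\bar z|^2\|\eta\|_{L^2}^2$, which furnishes the required lower bound with a constant $C=\tfrac14\|\eta\|_{L^2}^2$ independent of $\bar z$ once $N$ is large.

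I do not anticipate any genuine obstacle: the main point of care is the bookkeeping of the cross terms appearing when a derivative splits between $\eta$ and the oscillatory factor, verifying that each is indeed $O(N^{-1})$. The only separate subtlety is the degenerate case $\bar\rho=0$, in which the parametrization of $\xi$ via $\bar u/\bar\rho$ breaks down and the wave must be constructed from $\psi$ alone.
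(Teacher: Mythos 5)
Your construction is correct and is essentially the standard realization of the localized plane-wave lemma from the potential representation $\rho=\tfrac12\Delta\phi$, $u=\tfrac12(2\partial_{12}\phi,\partial_{22}\phi-\partial_{11}\phi)$, $m=-\tfrac12\partial_t\nabla\phi+\nabla^\perp\psi$ that the paper sets up immediately before the lemma; the paper itself omits the proof, citing [CFG], and the oscillatory-ansatz-plus-cutoff argument you give (with $\phi_N,\psi_N$ scaled by $N^{-2}$ and $N^{-1}$ so that the principal terms carry $\bar z\,\eta\cos(N\theta)$ while derivatives falling on $\eta$ produce $O(N^{-1})$ remainders) is precisely how that observation is made rigorous. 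The bookkeeping checks out, including the degenerate case $\bar\rho=\bar u=0$ where one builds the wave from $\psi$ alone with $\xi\perp\bar m$, and the constant in (3) is uniform in $\bar z$ because the remainder is bounded by $C(\eta)|\bar z|/N$.
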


Next, we calculate the $\Lambda$-convex hull of the set in state-space defined by \eqref{e:constitutive}. Recall (e.g. from \cite{KMS}, see also \cite{Pedregal,Kirchheim}) that for a closed set $K$ the $\Lambda$-convex hull is defined to be the largest closed set $K^{\Lambda}$ which cannot be separated from $K$: a state $z$ does not belong to $K^{\Lambda}$ if there exists a function $f$ which is $\Lambda$-convex in the sense that
$$
s\mapsto f(w_0+sw)\textrm{ is convex for all }w\in \Lambda
$$
such that $f\leq 0$ on $K$ and $f(z)>0$. It follows immediately from this definition that
$$
z_1,z_2\in K\textrm{ with }z_1-z_2\in \Lambda\,\Longrightarrow\,[z_1,z_2]\subset K^{\Lambda}.
$$
For further properties of $\Lambda$-convex hulls and functions, see \cite{Kirchheim}.

Now, let
$$
K=\{(\rho,u,m):\,|\rho|=1,\,m=\tfrac{1}{2}\rho u\}.
$$
Observe that \eqref{e:constitutive} is equivalent to
\begin{equation}\label{e:relaxation}
\bigl(\rho,u,m+\tfrac{1}{2}(0,1)\bigr)\in K\quad\textrm{Êa.e.}.
\end{equation}
We note also that in the absence of spatial boundaries, any solution
of \eqref{e:linear} with $(\rho,u,m)\in K$ for a.e.$(x,t)$ is also a solution of 
\eqref{e:linear}-\eqref{e:constitutive}. In particular, for solutions on the torus we can ignore the
additional constant vector $\tfrac{1}{2}(0,1)$.

\begin{lemma}
The function
\begin{equation}\label{e:function}
f(\rho,u,m):=\bigl|m-\frac{1}{2}\rho u\bigr|+\frac{1}{4}(\rho^2+|u|^2)
\end{equation}
is convex.
\end{lemma}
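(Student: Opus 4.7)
The plan is to express $f$ as a pointwise supremum of a family of convex functions indexed by $\xi \in \R^2$ with $|\xi|\le 1$, using the elementary identity $|w|=\sup_{|\xi|\le 1}\xi\cdot w$ valid for $w\in\R^2$. Applying this to $w=m-\tfrac{1}{2}\rho u$ gives
\begin{equation*}
f(\rho,u,m)=\sup_{|\xi|\le 1}\Bigl[\xi\cdot m-\tfrac{1}{2}\rho\,(\xi\cdot u)+\tfrac{1}{4}(\rho^2+|u|^2)\Bigr],
\end{equation*}
so it suffices to prove that the bracketed expression is convex in $(\rho,u,m)$ for every fixed admissible $\xi$. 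The term $\xi\cdot m$ is linear, so the only nontrivial point is the convexity of the quadratic form
\begin{equation*}
Q_\xi(\rho,u):=\tfrac{1}{4}\rho^2+\tfrac{1}{4}|u|^2-\tfrac{1}{2}\rho\,(\xi\cdot u)
\end{equation*}
in the variables $(\rho,u)\in\R\times\R^2$.

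The cleanest way to verify that $Q_\xi$ is convex when $|\xi|\le 1$ is to complete the square, which yields the identity
\begin{equation*}
Q_\xi(\rho,u)=\tfrac{1}{4}\,|u-\rho\xi|^2+\tfrac{1}{4}(1-|\xi|^2)\rho^2.
\end{equation*}
Both summands are manifestly nonnegative convex quadratics as soon as $|\xi|\le 1$ (the first is a squared norm of a linear map, the second is a nonnegative multiple of $\rho^2$). Consequently $Q_\xi$ is convex, and adding the linear term $\xi\cdot m$ preserves convexity.

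Finally, since $f$ is the supremum over $\xi$ in the closed unit ball of a family of convex functions of $(\rho,u,m)$, it is itself convex. I do not anticipate any real obstacle: the whole argument reduces to the algebraic identity above, which precisely encodes why the $L^2$-type weight $\tfrac{1}{4}$ in front of $\rho^2+|u|^2$ is exactly the right constant to dominate the cross term $\tfrac{1}{2}\rho(\xi\cdot u)$ for $|\xi|\le 1$. (As a sanity check, equality $|\xi|=1$ produces the degenerate direction $u=\rho\xi$, $m=\tfrac{1}{2}\rho u$, which is precisely a segment $[z_1,z_2]$ with $z_1-z_2\in\Lambda$, consistent with the expected sharpness of $f$ as a separating function for $K^{\Lambda}$.)
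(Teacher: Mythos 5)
Your proof is correct, and it takes a genuinely different route from the paper's. The paper proves convexity directly along lines: for any base point $z=(\rho,u,m)$ and direction $\bar z=(\bar\rho,\bar u,\bar m)$ it derives, by a short computation and the triangle inequality, a lower bound of the form $f(z+t\bar z)\ge f(z)+ct+\tfrac{t^2}{4}(\bar\rho^2+|\bar u|^2-2|\bar\rho\bar u|)$, and then uses $\bar\rho^2+|\bar u|^2-2|\bar\rho\bar u|\ge 0$ to conclude that each restriction $t\mapsto f(z+t\bar z)$ lies above a supporting line at $t=0$, hence is convex. You instead invoke the dual representation $|w|=\sup_{|\xi|\le 1}\xi\cdot w$ to write $f$ as a supremum over $\xi$ of the functions $h_\xi(\rho,u,m)=\xi\cdot m+Q_\xi(\rho,u)$, and verify convexity of each $h_\xi$ by the completed-square identity $Q_\xi(\rho,u)=\tfrac{1}{4}|u-\rho\xi|^2+\tfrac{1}{4}(1-|\xi|^2)\rho^2$, which is nonnegative precisely when $|\xi|\le 1$. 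Both arguments hinge on the same elementary fact, that the coefficient $\tfrac14$ in front of $\rho^2+|u|^2$ is exactly large enough to dominate the bilinear cross term $\tfrac12\rho(\xi\cdot u)$, but your route is cleaner: it avoids the bookkeeping of the paper's inequality in $t$, and moreover yields the stronger structural statement that $f$ is an explicit supremum of convex quadratics, from which convexity follows formally. Your closing remark that the degenerate direction $|\xi|=1$, $u=\rho\xi$ matches the $\Lambda$-segments of Proposition \ref{p:hull1} is a correct and useful sanity check, though not needed for the lemma itself.
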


\begin{proof}
A short calculation and the triangle inequality shows that,
for any $(\rho,u,m)$ and $(\bar{\rho},\bar{u},\bar{m})$
$$
f(\rho+\bar{\rho}t,u+t\bar{u},m+t\bar{m})\geq f(\rho,u,m)+ct+\frac{t^2}{4}(\bar{\rho}^2+|\bar{u}|^2-2|\bar{\rho}\bar{u}|),
$$
where 
$$
c=\frac{1}{2}(\bar{\rho}+u\cdot\bar{u})-|\bar{m}-\tfrac{1}{2}(\bar{\rho}u+\rho\bar{u})|.
$$
The convexity of 
$$
t\mapsto f(\rho+\bar{\rho}t,u+t\bar{u},m+t\bar{m}),
$$
and hence of $f:\R\times\R^2\times\R^2\to\R$ follows.
\end{proof}

\bigskip

\begin{proposition}\label{p:hull1}
$$
K^{\Lambda}=\Biggr\{(\rho,u,m):\,\bigl|\rho\bigr|\leq 1,\,\bigl|m-\tfrac{1}{2}\rho u\bigr|\leq \tfrac{1}{2}(1-\rho^2)\Biggl\}.
$$
\end{proposition}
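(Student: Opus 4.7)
The plan is to prove both inclusions, writing $U$ for the set on the right-hand side. For $K^{\Lambda}\subseteq U$, the strategy is to exhibit $\Lambda$-convex functions that vanish on $K$ but are strictly positive outside $U$; any such function separates an exterior point from $K^{\Lambda}$ by the very definition of the hull. The constraint $|\rho|\leq 1$ is handled by the (fully convex, hence trivially $\Lambda$-convex) function $\rho^{2}-1$. For the second constraint I would work with
$$f_{2}(\rho,u,m):=\bigl|m-\tfrac{1}{2}\rho u\bigr|+\tfrac{1}{2}\rho^{2}-\tfrac{1}{2},$$
which vanishes on $K$ and equals $|m-\tfrac{1}{2}\rho u|-\tfrac{1}{2}(1-\rho^{2})$, positive precisely when the desired inequality fails. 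The key technical step is to verify that $f_{2}$ is $\Lambda$-convex. The preceding lemma is tailor-made for this: write $f_{2}=f+g$ where $f$ is the function of the lemma and $g(\rho,u,m):=\tfrac{1}{4}\rho^{2}-\tfrac{1}{4}|u|^{2}-\tfrac{1}{2}$ is a pure quadratic form. Since any direction $(\bar{\rho},\bar{u},\bar{m})\in\Lambda$ satisfies $|\bar{\rho}|=|\bar{u}|$, the second derivative of $g$ along it is $\tfrac{1}{2}\bar{\rho}^{2}-\tfrac{1}{2}|\bar{u}|^{2}=0$; thus $g$ is $\Lambda$-affine, and $f_{2}=f+g$ inherits $\Lambda$-convexity from the convex $f$.

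For $U\subseteq K^{\Lambda}$, I would realize every $z\in U$ as an at most rank-two laminate with endpoints in $K$, exploiting the observation (recorded just before the lemma) that $z_{1},z_{2}\in K$ with $z_{1}-z_{2}\in\Lambda$ forces $[z_{1},z_{2}]\subset K^{\Lambda}$; a one-line argument using the separating functions extends this to endpoints already in $K^{\Lambda}$, so rank-two laminates are admissible as well. Consider first a boundary point $z=(\rho,u,m)\in U$ with $|\rho|<1$ and $|m-\tfrac{1}{2}\rho u|=\tfrac{1}{2}(1-\rho^{2})$. I would seek $z_{\pm}=(\pm 1,u_{\pm},\pm\tfrac{1}{2}u_{\pm})\in K$ and weights $\tfrac{1+\rho}{2},\tfrac{1-\rho}{2}$ so that $z=\tfrac{1+\rho}{2}z_{+}+\tfrac{1-\rho}{2}z_{-}$. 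Matching the $u$- and $m$-components forces
$$u_{+}=\frac{u+2m}{1+\rho},\qquad u_{-}=\frac{u-2m}{1-\rho},$$
and a direct computation gives $u_{+}-u_{-}=\frac{4(m-\tfrac{1}{2}\rho u)}{1-\rho^{2}}$, which has Euclidean length $2=|\rho_{+}-\rho_{-}|$ precisely by the boundary equality. Hence $z_{+}-z_{-}\in\Lambda$ and $z\in K^{\Lambda}$. The degenerate case $|\rho|=1$ trivially gives $m=\tfrac{1}{2}\rho u$ and $z\in K$.

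For a general interior point $z\in U$ with $|\rho|<1$ and $|m-\tfrac{1}{2}\rho u|<\tfrac{1}{2}(1-\rho^{2})$, I would take a unit vector $e$ parallel to $m-\tfrac{1}{2}\rho u$ (arbitrary if this quantity vanishes) and set
$$z_{\pm}':=\bigl(\rho,\;u,\;\tfrac{1}{2}\rho u\pm\tfrac{1}{2}(1-\rho^{2})e\bigr).$$
Both lie on the boundary of $U$ and thus in $K^{\Lambda}$ by the previous step; their difference $(0,0,(1-\rho^{2})e)$ has vanishing $\rho$- and $u$-components and so lies in $\Lambda$ trivially, and an obvious convex combination of $z_{+}'$ and $z_{-}'$ equals $z$. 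The rank-one closure of $K^{\Lambda}$ then yields $z\in K^{\Lambda}$. The main obstacle I anticipate is precisely the $\Lambda$-convexity of $f_{2}$: the Hessian of the absolute-value term degenerates along $\Lambda$ directions at exactly the rate that must be compensated by the $\tfrac{1}{2}\rho^{2}$ correction, so the clean route is to import the nontrivial convexity from the preceding lemma and cancel the surplus $\tfrac{1}{4}|u|^{2}$ with a $\Lambda$-affine quadratic, as above.
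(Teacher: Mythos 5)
Your proposal is correct and follows essentially the same route as the paper: you use the same $\Lambda$-convex separating function (your $f_2$ is the paper's $g-\tfrac12$, and the decomposition into the convex $f$ plus the $\Lambda$-affine quadratic $\tfrac14(\rho^2-|u|^2)$ is identical), and you build the same two-step laminate — first connecting $\rho=\pm1$ states along a $\Lambda$-segment to realize boundary states with $|m-\tfrac12\rho u|=\tfrac12(1-\rho^2)$, then sliding in a pure $(0,0,\bar m)\in\Lambda$ direction to fill the interior. The only cosmetic differences are that you solve for the endpoints $u_\pm$ given $z$ rather than parametrizing by $e$, and you include the redundant separator $\rho^2-1$ (already implied by $f_2\leq 0$).
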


\begin{proof}
Let $|\rho_0|<1$ and $u_0\in\R^2$. Fix $e\in\R^2$ with $|e|=1$ and
define
\begin{eqnarray}
u_1=u_0+(1-\rho_0)e\quad&&\quad \rho_1=1\label{e:v1r1}\\
u_2=u_0-(1+\rho_0)e\quad&&\quad \rho_2=-1.\label{e:v2r2}
\end{eqnarray}
Then it is easy to see that  for any $m_1,m_2$
$$
(\rho_1-\rho_2,u_1-u_2,m_1-m_2)\in\Lambda,
$$
hence in particular by setting $m_i=\tfrac{1}{2}\rho_i u_i$ we obtain
that, with 
\begin{equation*}
\begin{split}
m_0:&=\tfrac{1}{2}(1+\rho_0)m_1+\tfrac{1}{2}(1-\rho_0)m_2\\
&=\tfrac{1}{2}\rho_0u_0+\tfrac{1}{2}(1-\rho_0^2)e,
\end{split}
\end{equation*}
the state $(\rho_0,u_0,m_0)$ is contained in $K^{\Lambda}$.
Thus any $(\rho,u,m)$ with
$$
|\rho|\leq 1,\quad \bigl|m-\tfrac{1}{2}\rho u\bigr|= \tfrac{1}{2}(1-\rho^2)
$$
is contained in $K^{\Lambda}$. Moreover, since for any $\bar{m}\in\R^2$ we have $(0,0,\bar{m})\in\Lambda$, we deduce that in fact any $(\rho,u,m)$ with 
$$
|\rho|\leq 1,\quad \bigl|m-\tfrac{1}{2}\rho u\bigr|\leq \tfrac{1}{2}(1-\rho^2)
$$
is contained in $K^{\Lambda}$.

To see that this set is in fact the whole hull, we note that
$$
K\subset\{z:\,g(z)\leq 1/2\},
$$
where 
\begin{equation}\label{e:function2}
g(\rho,u,m):=f(\rho,u,m)+\frac{1}{4}(\rho^2-|u|^2)
\end{equation}
and $f$ is the convex function in \eqref{e:function}. On the other hand observe
that $\rho^2-|u|^2$ is $\Lambda$-convex (in fact $\Lambda$-affine). Therefore $g$ is
 $\Lambda$-convex, and therefore
necessarily $K^{\Lambda}\subset\{z:\,f(z)\leq 1/2\}$.
This completes the proof.
\end{proof}

\begin{remark}
Recalling \eqref{e:relaxation}, we see that if $\mathcal{K}$ denotes the constitutive set given by \eqref{e:constitutive}, 
then $(0,0,0)\in \partial\mathcal{K}^{\Lambda}$. This was the observation made in \cite[Remark 4.1]{CFG}.
\end{remark}

\begin{proposition}
The set $K^{\Lambda}$ characterizes precisely the relaxation of the IPM system. In other words it is stable
under weak convergence in the following sense: let $(\rho_k,u_k,m_k)\in L^{\infty}\times L^2\times L^2$ be a sequence 
of weak solutions to the linear system \eqref{e:linear} such that 
$$
(\rho_k,u_k,m_k)\in K^{\Lambda}\textrm{ for a.e. }(x,t),
$$
and assume $(\rho_k,u_k,m_k)\rightharpoonup (\rho,u,m)$ in $L^2$. Then the limit also satisfies
$$
(\rho,u,m)\in K^{\Lambda}\textrm{ for a.e. }(x,t).
$$
Moreover, $K^{\Lambda}$ is the smallest set containing $K$ with this property.
\end{proposition}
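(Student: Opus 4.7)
The proof breaks into two pieces: closedness of $K^\Lambda$ under weak convergence of solutions to \eqref{e:linear}, and minimality of $K^\Lambda$ among closed weakly-stable sets containing $K$. For the first piece I would encode membership in $K^\Lambda$ via the scalar inequality $g(\rho,u,m)\le\tfrac12$, where $g=f+\tfrac14(\rho^2-|u|^2)$ is the function from \eqref{e:function2}; a short computation identifies $g=|m-\tfrac12\rho u|+\tfrac12\rho^2$, whose sublevel set $\{g\le 1/2\}$ coincides with the description of $K^\Lambda$ from Proposition \ref{p:hull1} (and automatically enforces $|\rho|\le 1$). The key observation is that $g$ decomposes as a convex part $f$ plus a $\Lambda$-affine part $\tfrac14(\rho^2-|u|^2)$, and each summand is stable under weak convergence for a different reason.

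For the convex part, weak lower semicontinuity of convex integrands gives, for any nonnegative $\varphi\in C^\infty_c(\Omega\times(0,T))$,
\[
\int f(\rho,u,m)\,\varphi\,dx\,dt\;\le\;\liminf_{k}\int f(\rho_k,u_k,m_k)\,\varphi\,dx\,dt.
\]
For the $\Lambda$-affine part, the second and third equations of \eqref{e:linear} say precisely that $u_k-(0,\rho_k)$ is spatially divergence-free and $u_k+(0,\rho_k)$ is spatially curl-free, and their pointwise scalar product equals $|u_k|^2-\rho_k^2$. The Murat--Tartar div-curl lemma, applied in $x$ with $t$ as a passive parameter, then yields
\[
|u_k|^2-\rho_k^2\;\rightharpoonup\;|u|^2-\rho^2\quad\textrm{in }\D'(\Omega\times(0,T)).
\]
Adding the two passages and using the pointwise bound $g(\rho_k,u_k,m_k)\le \tfrac12$ gives $\int g(\rho,u,m)\,\varphi \le \tfrac12 \int \varphi$ for every $\varphi\ge 0$, so that $g(\rho,u,m)\le\tfrac12$ a.e., i.e.\ $(\rho,u,m)\in K^\Lambda$ a.e.

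For minimality, any closed weakly-stable $K'\supset K$ must contain every weak limit of $K$-valued solutions of \eqref{e:linear}, so it suffices to exhibit, for each $z\in K^\Lambda$, such a sequence converging weakly to the constant state $z$. The proof of Proposition \ref{p:hull1} already writes $z=\lambda z_1+(1-\lambda)z_2$ with $z_1,z_2\in K$ and $\bar z:=z_1-z_2\in\Lambda$; along a direction $\eta$ in the Fourier nullspace of \eqref{e:linear} at $\bar z$ (which exists by the definition of the wave cone), the plane wave $F_k(x,t):=z+h(k\eta\cdot(x,t))\bar z$ with $h$ a one-periodic function taking only the values $\{-\lambda,1-\lambda\}$ and satisfying $\int_0^1 h=0$ is a $K$-valued weak solution of \eqref{e:linear} on all of $\R^2\times\R$, and $F_k\rightharpoonup z$ in $L^2_{loc}$ as $k\to\infty$; restriction to $\Omega\times(0,T)$ gives the required approximating sequence (Lemma \ref{l:waves} provides the compactly supported variant needed later in the convex-integration scheme). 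The main obstacle is really the algebraic observation underlying part one, namely the recognition that the defining inequality of $K^\Lambda$ splits as convex plus $\Lambda$-affine, so that weak lower semicontinuity and div-curl each do exactly the work they can do; minimality is then a clean packaging of Lemma \ref{l:waves} and the two-point lamination from Proposition \ref{p:hull1}.
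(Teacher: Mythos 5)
Your stability argument is correct and is essentially the paper's proof with the details filled in: the paper's own proof is a single sentence citing the div-curl lemma, and you correctly supply the missing pieces, namely the algebraic identity $g=|m-\tfrac12\rho u|+\tfrac12\rho^2$, the sublevel-set characterization $K^\Lambda=\{g\le\tfrac12\}$ (note the paper writes $g\le 0$, a typo), and the observation that one needs {\it both} weak lower semicontinuity of the convex summand $f$ {\it and} weak continuity of $\rho^2-|u|^2$. Applying the Murat--Tartar lemma with $t$ as a passive parameter is the intended reading of the paper's argument.

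The minimality part, however, has a genuine gap. You claim that ``the proof of Proposition \ref{p:hull1} already writes $z=\lambda z_1+(1-\lambda)z_2$ with $z_1,z_2\in K$ and $z_1-z_2\in\Lambda$'', and build a single plane wave oscillating between $z_1$ and $z_2$. But that two-point decomposition only exists for {\it boundary} states of $K^\Lambda$, those with $|m-\tfrac12\rho u|=\tfrac12(1-\rho^2)$. To see this, solve for the endpoints: with $\rho_1=1$, $\rho_2=-1$, $\lambda=\tfrac{1+\rho}{2}$, the conditions $u=\lambda u_1+(1-\lambda)u_2$ and $m=\tfrac\lambda2 u_1-\tfrac{1-\lambda}{2}u_2$ determine $u_1,u_2$ uniquely, and one computes
\[
|u_1-u_2|=\frac{4\,|m-\tfrac12\rho u|}{1-\rho^2},
\]
so the wave-cone constraint $|u_1-u_2|=|\rho_1-\rho_2|=2$ forces $|m-\tfrac12\rho u|=\tfrac12(1-\rho^2)$. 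For strictly interior states the proof of Proposition \ref{p:hull1} uses a {\it second-order} laminate: first move along the $\Lambda$-direction $(0,0,\bar m)$ to two boundary states, then split each of those. To approximate such a $z$ weakly by $K$-valued solutions of \eqref{e:linear}, you therefore need a nested (two-scale) oscillation, or equivalently an iterated/diagonal argument, not a single plane wave. This is a standard repair, but as written your minimality argument covers only part of $K^\Lambda$. (The paper itself states minimality without proof, so the obligation here was entirely yours to discharge.)
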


\begin{proof}
The crucial information is that $\rho^2-|u|^2$ is continuous with respect to weak convergence, as a consequence of the div-curl lemma \cite{MuratTartar,Tartar,DiPerna}. Therefore,
if $g(\rho_k,u_k,m_k)\leq 0$ a.e. (where $g$ is the function from \eqref{e:function2}), then also $g(\rho,u,m)\leq 0$ a.e. 
\end{proof}

Although working with the set $K$ would suffice to construct 
weak solutions to \eqref{e:IPM1}-\eqref{e:IPM4}, there is a slight drawback in that $K$
is not bounded. More specifically, the solutions constructed will have $v\in L^2$ but not in $L^{\infty}$. For the details
see the next section. To remedy this problem we next consider a compact subset of $K$. 
Here the calculation of the $\Lambda$-convex hull requires a bit more work.

Fix $\M>1$ and let
\begin{equation}\label{e:K}
K_\M=\{(\rho,u,m):\,|\rho|=1,\,m=\tfrac{1}{2}\rho u,\,|u|\leq \M\}.
\end{equation}

\begin{proposition}\label{p:hull2}
For the set $K_\M$ above with $\M>1$, $(K_\M)^{\Lambda}$ is given by the set of inequalities
\begin{align}
|\rho|&\leq 1,\label{e:1}\\
|u|^2&\leq \M^2-(1-\rho^2),\label{e:2}\\
|m-\tfrac{1}{2}\rho u|&\leq \tfrac{1}{2}(1-\rho^2),\label{e:3}\\
|m-\tfrac{1}{2}u|&\leq \tfrac{\M}{2}(1-\rho),\label{e:4}\\
|m+\tfrac{1}{2}u|&\leq \tfrac{\M}{2}(1+\rho).\label{e:5}
\end{align}
\end{proposition}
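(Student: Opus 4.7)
The plan is to prove Proposition~\ref{p:hull2} by establishing both inclusions. Let $\mathcal{S}$ denote the set defined by the inequalities \eqref{e:1}--\eqref{e:5}.

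For the upper bound $(K_\M)^\Lambda\subseteq\mathcal{S}$, the strategy is to exhibit, for each inequality, a $\Lambda$-convex function bounded on $K_\M$ by the right-hand side. Inequalities \eqref{e:1} and \eqref{e:3} come for free from the function $g$ constructed in the proof of Proposition~\ref{p:hull1}, since $K_\M\subseteq K$. Inequality \eqref{e:2} uses the $\Lambda$-affine function $|u|^2-\rho^2$ (affine along $\Lambda$ since $|\bar u|^2 = \bar\rho^2$ on the wave cone) and the observation $|u|^2-1\leq\M^2-1$ on $K_\M$. For \eqref{e:4} and \eqref{e:5} I introduce the functions $h_\pm(\rho,u,m):=\bigl|m\mp\tfrac12 u\bigr|\pm\tfrac{\M}{2}\rho$, each a sum of a norm and a linear term, hence genuinely convex and a fortiori $\Lambda$-convex. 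On $K_\M$, where $m=\tfrac12\rho u$, one has $m\mp\tfrac12 u=\tfrac12(\rho\mp 1)u$, so $h_\pm=\tfrac12|\rho\mp 1||u|\pm\tfrac{\M}{2}\rho$; this equals $\M/2$ at $\rho=\pm 1$ and is at most $|u|-\M/2\leq\M/2$ at $\rho=\mp 1$. Thus $h_\pm\leq\M/2$ on $K_\M$, which transcribes exactly to \eqref{e:4}, \eqref{e:5} on $(K_\M)^\Lambda$.

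For the lower bound $\mathcal{S}\subseteq (K_\M)^\Lambda$, I argue constructively. If $|\rho_0|=1$, then \eqref{e:3} forces $m_0=\tfrac12\rho_0 u_0$ and \eqref{e:2} gives $|u_0|\leq\M$, so $(\rho_0,u_0,m_0)\in K_\M$. Assume $|\rho_0|<1$ and use \eqref{e:3} to write $m_0=\tfrac12\rho_0 u_0+\tfrac12(1-\rho_0^2)a$ with $|a|\leq 1$. A direct computation reformulates \eqref{e:4} and \eqref{e:5} as
\[
\bigl|u_0+(1-\rho_0)a\bigr|\leq\M, \qquad \bigl|u_0-(1+\rho_0)a\bigr|\leq\M,
\]
i.e. the endpoints $u_\pm=u_0\pm(1\mp\rho_0)a$ of the rank-1 splitting of Proposition~\ref{p:hull1} taken with $e=a$ lie in the closed ball of radius $\M$. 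When $|a|=1$ this already produces a rank-1 laminate whose two endpoints belong to $K_\M$. When $|a|<1$, the natural next step is a rank-2 laminate: choose unit vectors $e_1,e_2\in S^1$ with $a\in[e_1,e_2]$ such that the corresponding rank-1 endpoints $z_\pm(e_i)$ land in $K_\M$, and then interpolate the two states $(\rho_0,u_0,m_0^{(e_i)})$ along the free $\Lambda$-direction $(0,0,\bar m)$.

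The main obstacle is that the admissible arc $E:=\{e\in S^1:\, z_\pm(e)\in K_\M\}$ is precisely $S^1\cap D_1\cap D_2$ for two disks $D_1,D_2\subset\R^2$ depending on $u_0,\rho_0,\M$, and in general its 2D convex hull is strictly smaller than the set of admissible $a$, so rank-2 laminates of the above form do not exhaust $\mathcal{S}$. To handle these cases the plan is to perform a preliminary rank-1 step in a $\Lambda$-direction genuinely mixing $\rho$ and $u$ (so not going all the way to $\rho=\pm 1$), chosen so that the two new base points have smaller $|a|$ or a larger admissible arc, and then iterate the construction. Because the upper bound already shows that $\mathcal{S}$ is $\Lambda$-convex, every intermediate laminate automatically stays inside $\mathcal{S}$, so the iteration closes up. Controlling this iteration algebraically and verifying that every point of $\mathcal{S}$ is ultimately reached is the technical heart of the proposition, and is precisely the reason Proposition~\ref{p:hull2} requires more work than Proposition~\ref{p:hull1}.
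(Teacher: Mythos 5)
Your upper bound is correct and slightly more explicit than the paper's: the paper simply observes that \eqref{e:1}, \eqref{e:4}, \eqref{e:5} define convex sets and that \eqref{e:2}, \eqref{e:3} are sublevel sets of $\Lambda$-convex functions, whereas you exhibit the explicit convex functions $h_\pm(\rho,u,m)=\bigl|m\mp\tfrac12 u\bigr|\pm\tfrac{\M}{2}\rho$ realizing \eqref{e:4}, \eqref{e:5} and the $\Lambda$-affine quantity $|u|^2-\rho^2$ for \eqref{e:2}. Both routes are valid, and your bookkeeping is correct.

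The lower bound, however, contains a genuine gap that you acknowledge yourself: the iterated laminate construction is only sketched, and you explicitly leave open the step of ``verifying that every point of $\mathcal{S}$ is ultimately reached.'' This is not a cosmetic omission. The obstruction you identify --- that the admissible arc $E=S^1\cap D_1\cap D_2$ of directions giving rank-1 endpoints in $K_\M$ need not capture the target $a$ in its convex hull --- is real, and a cascade of preliminary rank-1 splittings of unbounded order is the wrong instrument: you have no control on termination, and in general there is no guarantee that finite-order lamination exhausts $(K_\M)^\Lambda$. The paper sidesteps this entirely by invoking a Krein--Milman type theorem for $\Lambda$-convexity \cite[Lemma 4.16]{Kirchheim}: to prove $\overline{U_\M}\subset (K_\M)^\Lambda$ it suffices to verify the purely \emph{local} condition that every $z\in\partial U_\M\setminus K_\M$ lies in the interior of a nontrivial $\Lambda$-segment contained in $\partial U_\M$, so that all $\Lambda$-extreme points of $\overline{U_\M}$ already lie in $K_\M$. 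This replaces your unbounded global iteration by a single local step, classified according to which of \eqref{e:3}--\eqref{e:5} are active at $z$. The algebraic identity \eqref{e:identity}, which relates the five quadratic quantities and forces the active constraints into a small number of compatible patterns, is what makes that classification manageable. You should replace the lamination scheme by this extremality argument; your observations about the rank-1 endpoints, the rewrite of \eqref{e:4}--\eqref{e:5} as $|u_0\pm(1\mp\rho_0)a|\leq\M$, and the free direction $(0,0,\bar m)$ then slot directly into the resulting case analysis.
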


\begin{proof}
{\bf Step 1. }
Observe that \eqref{e:1},\eqref{e:4} and \eqref{e:5} define convex sets whereas
\eqref{e:2} and \eqref{e:3} define sublevel-sets of $\Lambda$-convex functions. Since states in $K_\M$ satisfy all 5 inequalities, it follows that $(K_\M)^{\Lambda}$ is certainly contained in the set defined by \eqref{e:1}-\eqref{e:5}. 

Conversely, let $U_\M$ be the set of all states $(\rho,u,m)$ with all inequalities \eqref{e:1}-\eqref{e:5} strict. We need to show
that $\overline{U_\M}\subset (K_\M)^{\Lambda}$. To this end it suffices to show that
\begin{equation}\label{e:extr1}
\begin{split}
\textrm{for all $z\in \partial U_\M\setminus K_\M$,}&\textrm{ there exists $\bar{z}\in \Lambda\setminus\{0\}$ such that}\\
z\pm\bar{z}&\in\partial U_\M.
\end{split}
\end{equation}
Indeed, from this it follows that $\textrm{extr }\overline{U_\M}\subset K_\M$, so that $\overline{U_\M}\subset (K_\M)^{\Lambda}$ (c.f. the Krein-Milman type theorem in the context of $\Lambda$-convexity \cite[Lemma 4.16]{Kirchheim}).
Thus, let $z=(\rho,u,m)\in \partial U_\M\setminus K_\M$. 

{\bf Step 2. }We first note that if \eqref{e:3} is an equality, i.e. 
$$
m=\frac{1}{2}\rho u+\frac{1}{2}(1-\rho^2)e
$$
for some $e\in\S^1$, then $z$ lies on a $\Lambda$-segment connecting $z_1,z_2\in K_\M$. More precisely,
if
\begin{equation*}
\begin{split}
z_1&=\left(1,u+(1-\rho)e,\tfrac{1}{2}(u+(1-\rho)e)\right),\\
z_2&=\left(-1,u-(1+\rho)e,\tfrac{1}{2}(u-(1+\rho)e)\right),
\end{split}
\end{equation*}
then $z=\tfrac{1}{2}(1+\rho)z_1+\tfrac{1}{2}(1-\rho)z_2$, as in Proposition \ref{p:hull1}. 

{\bf Step 3. }Next, by elementary computations we verify the following identity:
\begin{equation}\label{e:identity}
\begin{split}
&\left[\frac{(1-\rho^2)^2}{4}-|m-\frac{1}{2}\rho u|^2\right]+\frac{(1-\rho)^2}{4}\left[\M^2-|u|^2-(1-\rho^2)\right]=\\
=&\frac{1+\rho}{2}\left[\frac{\M^2}{4}(1-\rho)^2-|m-\frac{1}{2}u|^2\right]+\frac{1-\rho}{2}\left[\frac{\M^2}{4}(1+\rho)^2-|m+\frac{1}{2}u|^2\right]
\end{split}
\end{equation}
Observe that the terms in square brackets are all non-negative in $\overline{U_\M}$. In particular, if \eqref{e:3} is strict, then either \eqref{e:4} or \eqref{e:5} has to be strict as well. On the other hand, if all three \eqref{e:3}-\eqref{e:5} are strict inequalities, then it is easy to verify that, with $\bar{z}=(1,e,0)$, 
$$
z+t\bar{z}\in\overline{U_\M}
$$
for all $t$ with $|t|$ sufficiently small, provided we choose $e\in S^1$ so that $\rho=u\cdot e$.

{\bf Step 4. }  
The remaining case is when \eqref{e:3} and one of \eqref{e:4} or \eqref{e:5} is a strict inequality and the other an equality. 
By symmetry, we may assume that \eqref{e:5} is strict, and 
$$
|m-\tfrac{1}{2}u|=\tfrac{\M}{2}(1-\rho).
$$
Choose $e\in S^1$ so that $\rho=u\cdot e$, set
$$
\bar{z}=\left(1,e,\tfrac{1}{2}e-\tfrac{1}{1-\rho}(m-\tfrac{1}{2}u)\right),
$$
and $z_t=(\rho_t,u_t,m_t):=z+t\bar{z}$. Then for all $t$
\begin{equation*}
\begin{split}
|u_t|^2&=\M^2-(1-\rho_t^2)\\
|m_t-\tfrac{1}{2}u_t|&=\tfrac{\M}{2}(1-\rho_t).
\end{split}
\end{equation*}
Moreover, by continuity, \eqref{e:3} and \eqref{e:5} will continue to hold for $|t|$ small enough. This implies \eqref{e:extr1} and
thus the proof is completed. 

\end{proof}

\begin{remark}
The condition $\M>1$ is sharp. Indeed, if $\M<1$, then it is not difficult to see (using standard technology on gradient Young measures) that 
approximate solutions to the corresponding inclusion are compact.  In fact, if $\Omega=\T^2$, by just looking at \eqref{e:IPM2}-\eqref{e:IPM3} 
we deduce easily that 
any weak solution $(\rho,u,m)$ of \eqref{e:linear}-\eqref{e:constitutive} with $\|u\|_{L^{\infty}}<1$ is constant (see for instance \cite{SMLecturenotes,Sverak}). 
\end{remark}

\begin{remark}
The computations in Proposition \ref{p:hull1} and \ref{p:hull2} do not depend on the vectors $u,m$ in state-space to be 2-dimensional. Therefore, the 
same formulae continue to hold for the relaxation of the IPM equation in $n$ space dimensions with any $n\geq 2$.
\end{remark}

\section{Construction of Weak Solutions}\label{s:constructions}

There are several ways of constructing weak solutions to differential inclusions, depending
on the particular problems at hand. For the system \eqref{e:linear}-\eqref{e:constitutive} 
the relaxation is sufficiently large so that a relatively simple iteration procedure, involving single localized plane-waves, suffices. As always, the crucial ingredient is to show that there exists an open 
set $U$ where {\it states are stable only near }$K$ (c.f. \cite[Definition 3.16]{Kirchheim}). This is expressed by conditions (H1)-(H2) in the Appendix. The methodology of how to pass from this property to weak solutions via the Baire category theorem is well known \cite{Kirchheim,CFG,euler1,euler2}, but for the convenience of the reader we present the details in Theorem \ref{t:A1} in the Appendix. 

Recall the definition of $K$ and $K_\M$ from Section \ref{s:relaxations}, and 
let
\begin{equation}\label{e:U}
U=\textrm{int }K^{\Lambda},\quad U_\M=\textrm{int }(K_\M)^{\Lambda},
\end{equation}
so that 
\begin{equation}\label{e:ineq1}
U=\Biggr\{(\rho,u,m):\,\bigl|\rho\bigr|< 1,\,\bigl|m-\tfrac{1}{2}\rho u\bigr|< \tfrac{1}{2}(1-\rho^2)\Biggl\},
\end{equation}
and $U_\M$ is analogously given by the inequalities \eqref{e:1}-\eqref{e:5} all being strict.

\begin{proposition}\label{p:perturbation1}
There exists a constant $c>0$ with the following property.
For any $(\rho,u,m)\in U$ there exists $(\bar{\rho},\bar{u},\bar{m})\in\Lambda$ with $|\bar{\rho}|^2+|\bar{m}|^2=1$ such that
$$
(\rho,u,m)+t(\bar{\rho},\bar{u},\bar{m})\in U\quad\textrm{ for }|t|<c(1-\rho^2)
$$
\end{proposition}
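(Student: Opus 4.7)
The plan is to construct $\bar z$ explicitly and then verify the inclusion by a direct computation, splitting $U$ into two regimes according to the size of the ``slack variable'' $r_0 := m_0 - \tfrac12\rho_0 u_0$, which by definition of $U$ satisfies $|r_0|<\tfrac12(1-\rho_0^2)$. The conceptual obstacle is that the actual room to the boundary, $\tfrac12(1-\rho_0^2)-|r_0|$, may be much smaller than the desired perturbation scale $c(1-\rho_0^2)$; near $\partial U$ the direction $\bar z$ must therefore be chosen essentially \emph{tangent} to $\partial U$ rather than inward.

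First I would dispose of the easy regime $|r_0|\le\tfrac14(1-\rho_0^2)$ by taking $\bar z=(0,0,e')$ for any unit $e'\in\R^2$. This lies in $\Lambda$ (since $\bar\rho=0$ forces $\bar u=0$), satisfies the normalization $|\bar\rho|^2+|\bar m|^2=1$, keeps $\rho$ fixed, and gives $|r(t)|\le|r_0|+|t|$; the defining inequalities of $U$ then hold for $|t|<\tfrac14(1-\rho_0^2)$.

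In the hard regime $|r_0|>\tfrac14(1-\rho_0^2)$ I would mimic the laminate from the proof of Proposition~\ref{p:hull1}. Setting $e:=r_0/|r_0|$, take
$$\bar z \;=\; \alpha\bigl(2,\,2e,\,u_0-\rho_0 e\bigr),\qquad \alpha:=\bigl(4+|u_0-\rho_0 e|^2\bigr)^{-1/2}.$$
Then $\bar\rho^2=|\bar u|^2=4\alpha^2$ (so $\bar z\in\Lambda$) and the normalization is built into the choice of $\alpha$. A short expansion along $z(t)=z+t\bar z$ should produce the identities
$$r(t)\;=\;r_0-A(t)\,e,\qquad \tfrac12\bigl(1-\rho(t)^2\bigr)\;=\;\tfrac12(1-\rho_0^2)-A(t),$$
with the single scalar $A(t):=2\alpha\rho_0 t+2\alpha^2 t^2$. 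Because $e$ is parallel to $r_0$, we get $|r(t)|=\bigl||r_0|-A(t)\bigr|$, so the inequality defining $U$ collapses to the scalar condition $A(t)<\tfrac14(1-\rho_0^2)+\tfrac12|r_0|$. The crude bound $|A(t)|\le|t|+t^2/2$ (using $\alpha\le 1/2$), combined with the regime hypothesis $|r_0|>\tfrac14(1-\rho_0^2)$, then allows $|t|<\tfrac14(1-\rho_0^2)$; so $c=1/4$ works in both regimes.

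The part I expect to require the most care is the hard regime, specifically the uniformity in $u_0$: naively the normalization $\alpha$ kills the $\bar\rho$-component as $|u_0|\to\infty$, and one might fear that the admissible range of $t$ shrinks to zero. The resolution is that $u_0$ enters the trajectory only through $\alpha$; after the rescaling $s:=\alpha t$ the $(\rho,r)$-dynamics is autonomous and $u_0$-free, which is exactly why the $u_0$-dependence disappears from the key scalar inequality. Identifying the ``tangent to the boundary'' direction coming from the $\Lambda$-segment of Proposition~\ref{p:hull1} is what produces this cancellation and reduces the whole problem to a one-dimensional estimate.
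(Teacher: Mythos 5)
Your proof is correct and takes the same approach as the paper: the identical two-regime split according to whether $|m-\tfrac12\rho u|$ is below or above $\tfrac14(1-\rho^2)$, with the pure-$m$ perturbation direction in the first regime and the (normalized) laminate direction $(2,2e,u-\rho e)$ from Proposition~\ref{p:hull1} in the second. Where the paper merely invokes ``by continuity'' to push the boundary $\Lambda$-segment into the interior, you carry out the explicit one-parameter computation of $A(t)$ and reduce the membership condition to the scalar inequality $A(t)<\tfrac14(1-\rho^2)+\tfrac12|r_0|$, which makes the uniformity in $u$ (via $\alpha\le\tfrac12$) transparent.
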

\begin{proof}
From the explicit formulas \eqref{e:v1r1}-\eqref{e:v2r2} it follows that for any $(\rho,u,m)$ with $|\rho|<1$ and $|m-\tfrac{1}{2}\rho u|=\tfrac{1}{2}(1-\rho^2)$ there exists 
$(\bar{\rho},\bar{u},\bar{m})\in\Lambda$ with $\bar{\rho}=1$ such that
$$
(\rho,u,m)+t(\bar{\rho},\bar{u},\bar{m})\in K^{\Lambda}\quad\textrm{ for }|t|<c_0(1-\rho^2)
$$
for some $c_0>0$.
From this we deduce by continuity the claim (with $0<c<c_0$) in the case where 
$$
\tfrac{1}{4}(1-\rho^2)<|m-\tfrac{1}{2}\rho u|<\tfrac{1}{2}(1-\rho^2).
$$
In the remaining case, where $|m-\tfrac{1}{2}\rho u|<\tfrac{1}{4}(1-\rho^2)$, we can take
$(\bar{\rho},\bar{u},\bar{m})=(0,0,\bar{m})$ with $\bar{m}$ parallel to $m-\tfrac{1}{2}\rho u$.
\end{proof}

As a first application we obtain the following variant of \cite[Theorem 5.2]{CFG}:
\begin{theorem}\label{t:1}
There exist infinitely many periodic weak solutions to \eqref{e:IPM1}-\eqref{e:IPM3} 
with 
$$
\rho\in L^{\infty}(\T^2\times\R),\,v\in L^{\infty}(\R;L^2(\T^2)),
$$ 
such that 
$$
|\rho(x,t)|=\begin{cases} 1&\textrm{ a.e. }(x,t)\in \T^2\times (0,T),\\
0&\textrm{ for $t<0$ or $t>T$.}\end{cases}
$$ 
\end{theorem}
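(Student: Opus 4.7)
The plan is to apply the Baire-category framework of Theorem \ref{t:A1} (in the Appendix) to the differential inclusion \eqref{e:linear}-\eqref{e:constitutive} on the torus. Since on $\T^2$ there is no spatial boundary and the extra constant $\tfrac12(0,1)$ in \eqref{e:relaxation} may be absorbed, I take the constitutive set to be $K$ and work with the open hull $U=\mathrm{int}\,K^\Lambda$ described explicitly by \eqref{e:ineq1}. The space of subsolutions will be
$$
X_0:=\bigl\{(\rho,u,m)\in C^\infty(\T^2\times\R;\R\times\R^2\times\R^2):\,\eqref{e:linear}\text{ holds},\ \supp(\rho,u,m)\subset\T^2\times(0,T),\ (\rho,u,m)(x,t)\in U\ \forall(x,t)\bigr\},
$$
which is non-empty because $(0,0,0)\in U$ by \eqref{e:ineq1}. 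The bound $|\rho|<1$ from the hull and the equations of \eqref{e:linear}, together with the compact time support, give a uniform $L^2_{x,t}$ bound on $u$ and $m$, so I can equip $X_0$ with a metric $d$ inducing the weak-$L^2$ topology on the resulting bounded set, and take $(X,d)$ to be its completion.

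Next I consider the functional $I(\rho,u,m):=\int_0^T\!\int_{\T^2}\rho^2\,dx\,dt$, which takes values in $[0,T|\T^2|]$. It is weakly lower-semicontinuous and bounded, hence Baire-1 on $X$, so its continuity points form a dense $G_\delta$ set $R\subset X$. The central claim is that $I=T|\T^2|$ on $R$, which by the constraint $|\rho|\leq 1$ is equivalent to $|\rho|=1$ a.e.\ on $\T^2\times(0,T)$. Granting this, every $(\rho,u,m)\in R$ still satisfies \eqref{e:linear}, takes values in $K$ a.e., and vanishes outside $(0,T)$; reversing \eqref{e:u} via $v:=\tfrac12(u-(0,\rho))$ then yields the desired weak solution of \eqref{e:IPM1}-\eqref{e:IPM3} with $\rho\in L^\infty(\T^2\times\R)$ and $v\in L^\infty(\R;L^2(\T^2))$.

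To establish the claim I argue by contradiction. If $(\rho,u,m)\in X_0$ has $\eta:=T|\T^2|-I(\rho,u,m)>0$, then on a set of measure at least $\eta/2$ one has $1-\rho^2\geq\eta/(2T|\T^2|)$, and Proposition \ref{p:perturbation1} supplies at every such Lebesgue point a direction $\bar z\in\Lambda$ of unit norm with $(\rho,u,m)+t\bar z\in U$ for $|t|<c(1-\rho^2)$. Choosing a fine grid of disjoint cubes covering this region and inserting in each cube a high-frequency version of the localized plane wave from Lemma \ref{l:waves}, I obtain a perturbation $w\in C_c^\infty$ that solves \eqref{e:linear}, keeps the state inside $U$, is arbitrarily small in the metric $d$, and satisfies $\int w_\rho^2\geq c_1\int(1-\rho^2)^2\,dx\,dt\geq c_2\eta^2$ (the last step by Cauchy-Schwarz). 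Since $w_\rho$ oscillates the cross term $\int\rho w_\rho$ is negligible, so $I(\rho+w)\geq I(\rho)+\tfrac{c_2}{2}\eta^2$, contradicting continuity of $I$ at $(\rho,u,m)$. Infinitely many distinct solutions then follow because $R$ is residual in $X$, and $X$ contains a continuum of distinct subsolutions (e.g.\ the one-parameter family $\e\cdot(\rho_0,u_0,m_0)$ obtained from any nontrivial seed in $X_0$).

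The main obstacle is the double degeneracy of the perturbation step: the admissible amplitude $c(1-\rho^2)$ vanishes precisely where a perturbation is most needed, namely near $|\rho|=1$, while the quantitative gain $\int w_\rho^2\geq c_2\eta^2$ must nevertheless be linear in the deficit $\eta$. Proposition \ref{p:perturbation1} is tailored exactly to match this scaling, and when combined with the $L^2$ lower bound $\int|w_j|^2\geq C|\bar z|^2$ of Lemma \ref{l:waves} it delivers the required estimate; once this perturbation property is in place, the remainder is a mechanical application of the Baire theorem packaged in Theorem \ref{t:A1}.
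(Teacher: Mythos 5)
You follow the paper's strategy exactly: define the subsolution space $X_0$ from compactly-supported-in-time smooth solutions of \eqref{e:linear} taking values in $U$, observe $(0,0,0)\in U$ for non-emptiness, get an $L^2$ bound from $|\rho|\leq 1$ together with elliptic estimates, and then invoke Lemma \ref{l:waves}, Proposition \ref{p:perturbation1} and Theorem \ref{t:A1}. This part matches the paper and is fine.

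However, the ad hoc Baire sketch you substitute for Theorem \ref{t:A1}'s proof contains a real gap. You choose $I(\rho,u,m)=\int\rho^2$ and require a definite gain $\int w_\rho^2\geq c_2\eta^2$ from the perturbation. But Proposition \ref{p:perturbation1} normalizes by $|\bar{\rho}|^2+|\bar{m}|^2=1$ (not $|\bar{z}|=1$), and in the interior regime $|m-\tfrac12\rho u|<\tfrac14(1-\rho^2)$ it returns the direction $\bar{z}=(0,0,\bar{m})$, so $\bar{\rho}=0$. The localized plane wave from Lemma \ref{l:waves} along this direction has vanishing $\rho$-component, hence $w_\rho\equiv 0$ and your $I$ does not increase at all; the claimed bound $\int w_\rho^2\gtrsim\int(1-\rho^2)^2$ simply fails. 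The appendix's formulation avoids this precisely by measuring the gain in the full $L^2$ norm $\int|z|^2$: the normalization $|\bar{\rho}|^2+|\bar{m}|^2=1$ forces $|\bar{z}|\geq 1$, and (H1)(iii) then gives a positive $L^2$ gain regardless of which components of $\bar{z}$ carry the oscillation. A further (minor) slip: the continuity of $I$ is used at the limit point $z\in X$, not at the subsolution $(\rho,u,m)\in X_0$ being perturbed; the actual contradiction in Theorem \ref{t:A1} is that both $z_j$ and $\tilde{z}_j$ would have to converge strongly in $L^2$ to $z$ while remaining uniformly $L^2$-separated. If you delete the bespoke sketch and simply verify (H1)--(H3) and cite Theorem \ref{t:A1}, as the paper does, the proof is correct.
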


\begin{proof}
We construct solutions $(\rho,u,m)$ of \eqref{e:linear} such that
$$
(\rho,u,m)\in K\,\textrm{ a.e. }(x,t).
$$
For any such solution $|\rho|=1$ and $m=\tfrac{1}{2}\rho u$ almost everywhere by definition of $K$, and therefore, on the torus $\T^2$ satisfies
$$
\partial_t\rho+\tfrac{1}{2}\div(\rho u-(0,1)).
$$
Therefore, recalling that $v=\tfrac{1}{2}(u-(0,\rho))$ we deduce that $(\rho,v)$ is a weak solution of \eqref{e:IPM1}-\eqref{e:IPM3}. 

Next, define the space of {\it subsolutions} as follows. Let
\begin{equation*}
\begin{split}
\D&=\T^2\times \R,\\
\O&=\T^2\times (0,T),
\end{split}
\end{equation*}
and
$$
X_0=\Bigl\{z=(\rho,u,m)\in C^{\infty}_c(\O):\,\textrm{\eqref{e:linear} holds and }z(x,t)\in U\quad\forall\,(x,t)\in\O\Bigr\}.
$$
Note that $(0,0,0)\in U$, hence $X_0$ is nonempty.
Any $(\rho,u,m)\in X_0$ satisfies $|\rho|\leq 1$. Therefore, whenever $(\rho,u,m)\in X_0$, we have
$$
\|\rho\|_{L^{\infty}_tL^2_x(\D)}\leq 1,
$$
hence, using standard elliptic estimates and \eqref{e:linear},
$$
\|u\|_{L^{\infty}_tL^2_x(\D)}\leq C
$$
for some constant $C$. Together with the definition of $U$ this implies that $X_0$ is a bounded subset of $L^2_{x,t}(\D)$ and in particular 
$X_0$ satisfies condition (H3) in the Appendix.
Moreover, Lemma \ref{l:waves} and Proposition \ref{p:perturbation1} imply that (H1)-(H2) in the Appendix are satisfied, and consequently Theorem \ref{t:1} follows from applying Theorem \ref{t:A1} to $X_0$.

\end{proof}

\bigskip

\begin{proposition}\label{p:perturbation2}
Let $\M>2$. If $K_\M$ and $U_\M$ are as defined in \eqref{e:K} and \eqref{e:U}, then for any $z\in U_\M$ with $\dist(z,K_\M)\geq\alpha>0$ there exists $\bar{z}\in\Lambda$ with $|\bar{z}|=1$ such that
$$
z+t\bar{z}\in U_\M\textrm{ whenever }|t|<\beta,
$$
where $\beta>0$ depends only on $\alpha$ and $\M$. 
\end{proposition}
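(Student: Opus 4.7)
The plan is to adapt the case analysis from the proof of Proposition \ref{p:hull2} into a quantitative perturbation estimate. The essential new difficulty compared to Proposition \ref{p:perturbation1} is that $(K_\M)^\Lambda$ is cut out by five inequalities instead of two, so the direction $\bar z\in\Lambda$ must be chosen depending on which of \eqref{e:1}--\eqref{e:5} is near equality at $z$, and all five must remain strict along $z+t\bar z$.

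First I would translate the assumption $\dist(z,K_\M)\geq\alpha$ into the uniform lower bound $1-|\rho|\geq c(\M)\alpha$. When $\rho$ is close to $+1$, testing with the point $(1,u,\tfrac12 u)\in K_\M$ (valid since $|u|<\M$ by \eqref{e:2}) gives $\dist(z,K_\M)^2 \leq (1-\rho)^2 + |m-\tfrac12 u|^2$; combining with the strict form of \eqref{e:4}, i.e.\ $|m-\tfrac12 u|<\tfrac{\M}{2}(1-\rho)$, this forces $1-\rho\gtrsim\alpha/\M$, and the case $\rho$ near $-1$ is symmetric via \eqref{e:5}. Consequently $1-\rho^2$ and $1\mp\rho$ are all bounded below in terms of $\alpha$ and $\M$, and so are the right-hand sides of \eqref{e:3}--\eqref{e:5}.

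Next I would branch into three cases according to which of \eqref{e:3}, \eqref{e:4}, \eqref{e:5} is nearly tight at $z$, with a threshold $\delta=\delta(\alpha,\M)$ to be fixed. In the first case, all three slacks exceed $\delta$, and I would take $\bar z=(0,0,\bar m)$ for any unit $\bar m\in\R^2$ (this lies in $\Lambda$ since $\bar\rho=|\bar u|=0$); the LHS of each of \eqref{e:3}--\eqref{e:5} changes by at most $|t|$ while \eqref{e:1}, \eqref{e:2} are untouched, so $\beta=\delta$ works. In the second case, \eqref{e:3} has slack $<\delta$, and I would use the $\Lambda$-segment direction $z_1-z_2$ from \eqref{e:v1r1}--\eqref{e:v2r2} as in Proposition \ref{p:perturbation1}, rescaled to unit length; the constraints \eqref{e:2}, \eqref{e:4}, \eqref{e:5} remain strict along the segment because $\M>2$ provides absolute slack in \eqref{e:2} and because $1\mp\rho\geq c\alpha/\M$ provides slack in \eqref{e:4}--\eqref{e:5}. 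In the third case, \eqref{e:3} has slack $\geq\delta$ but (say) \eqref{e:4} is nearly tight, and I would take the Step 4 direction
$$
\bar z = \Bigl(1,\,e,\,\tfrac12 e - \tfrac{1}{1-\rho}(m-\tfrac12 u)\Bigr),\qquad e\in S^1,\ \rho = u\cdot e,
$$
rescaled to unit norm. Identity \eqref{e:identity} is what guarantees this trichotomy is exhaustive: it links the five slacks quantitatively and in particular rules out the configuration in which \eqref{e:3}, \eqref{e:4}, \eqref{e:5} are simultaneously tight.

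The main obstacle will be the third case. The $m$-coordinate of $\bar z$ contains the factor $1/(1-\rho)$, which a priori blows up as $\rho\to 1$; however, \eqref{e:4} gives the crude bound $|m-\tfrac12 u|/(1-\rho) < \M/2$, so $|\bar z|$ is controlled purely in terms of $\M$, and rescaling to unit norm only shrinks the admissible $\beta$ by an $\M$-dependent factor. As shown in Step 4 of the proof of Proposition \ref{p:hull2}, along this direction the slack of \eqref{e:2} is preserved exactly and the slack of \eqref{e:4} decreases linearly from its positive initial value; the slacks of \eqref{e:1}, \eqref{e:3}, \eqref{e:5} vary linearly in $t$ with Lipschitz constant of order $\M$, so they remain positive for $|t|<\beta$ provided that their initial slacks are bounded below by $c(\alpha,\M)$. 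Verifying this bound on the initial slack of \eqref{e:5} using \eqref{e:identity}, together with the earlier lower bound on $1-\rho$, is the final technical point, after which one reads off a uniform $\beta=\beta(\alpha,\M)$.
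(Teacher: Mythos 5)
The high-level strategy here --- case analysis on which of \eqref{e:3}--\eqref{e:5} is nearly tight, with $\Lambda$-directions taken from Steps 2--4 of Proposition \ref{p:hull2} --- is the right one and parallels the paper. But the execution has genuine gaps, and the paper in fact organizes the argument differently (reducing by compactness and the local Lipschitz-graph structure of $\partial U_\M$ to a local stability statement for $\Lambda$-segments \emph{in the boundary}, which sidesteps exactly the estimates you are struggling with).

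Concretely, the assertion that ``$\M>2$ provides absolute slack in \eqref{e:2}'' is false: a state $z\in U_\M$ at distance $\geq\alpha$ from $K_\M$ can have $|u|^2$ arbitrarily close to $\M^2-(1-\rho^2)$. Similarly, the lower bound $1\mp\rho\geq c(\M)\alpha$ only controls the \emph{right-hand sides} of \eqref{e:4}--\eqref{e:5}, not the slacks $\tfrac{\M}{2}(1\mp\rho)-|m\mp\tfrac12 u|$, which can be zero even when $\rho$ is bounded away from $\pm1$. So in your Case (b) the direction $z_1-z_2$ may push the state out through \eqref{e:2} or \eqref{e:4}--\eqref{e:5} immediately (note that along $z_1-z_2$ one has $u_t=u+te$ with $e$ dictated by \eqref{e:3}, and the derivative of the \eqref{e:2}-slack is $2(\rho-u\cdot e)$, which need not vanish or have a sign). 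Also, \eqref{e:identity} does \emph{not} rule out \eqref{e:3}--\eqref{e:5} being simultaneously tight --- they can all be equalities as soon as \eqref{e:2} is also an equality (e.g.\ $\rho=0$, $u\perp e$, $|u|^2=\M^2-1$, $m=\tfrac12 e$); what the identity actually gives is that tightness of \eqref{e:3} \emph{and} \eqref{e:2} forces tightness of both \eqref{e:4} and \eqref{e:5}, which is the opposite of the ``room'' you want.

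The ingredient you are missing, and the heart of the paper's proof, is the careful choice of $e\in S^1$ in the nearly-tight-\eqref{e:3} case: $e$ is \emph{not} taken to be $(m-\tfrac12\rho u)/|m-\tfrac12\rho u|$, but is chosen so that the companion point $\bigl(\rho,u,\tfrac12\rho u+\tfrac12(1-\rho^2)e\bigr)$ lies on $\partial U_\M$ (so that the Step 2 segment through it has endpoints in $K_\M$ and preserves \eqref{e:2} along the full range of $t$), \emph{and} so that the residual $\tilde m=m-\tfrac12\rho u-\tfrac12(1-\rho^2)e$ satisfies the transversality bound $-e\cdot\tilde m\geq\delta(\M)|\tilde m|$. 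This uses $\M>2$ precisely to guarantee that, for fixed $(\rho,u)$, the circle \eqref{e:3} is strictly smaller than the circles \eqref{e:4}--\eqref{e:5} and so either lies entirely inside them or meets them transversely with angle bounded below by $\M-2$. Without this choice and the transversality estimate, one cannot get a $\beta$ that degenerates only with $\alpha$ and $\M$; the naive choices of direction and the naive lower bounds on slacks do not close the argument.
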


\begin{proof}
Since $\partial U_\M$ is compact and locally the graph of a Lipschitz function, it suffices to prove the following quantitative version of \eqref{e:extr1}:
\begin{equation}\label{e:extr2}
\begin{split}
\textrm{ for all }z_0\in \partial U_\M\setminus K&\textrm{ there exists $\eps>0,r>0$ so that}\\
\textrm{ for all }z\in B_{\eps}(z_0)&\cap\partial U_\M\textrm{ there exists }\bar{z}\in \Lambda\textrm{ with }\\
|\bar{z}|=r\,\textrm{ and }&\,z\pm \bar{z}\in \partial U_\M. 
\end{split}
\end{equation}
Let $z_0=(\rho_0,u_0,m_0)\in \partial U_\M\setminus K_\M$. 

If $|m_0-\tfrac{1}{2}\rho_0u_0|<\tfrac{1}{2}(1-\rho_0^2)$, then using the argument from Step 3 in the proof of Proposition \ref{p:hull2} we may assume that $|m_0+\tfrac{1}{2}u_0|<\tfrac{\M}{2}(1+\rho_0)$. But then the $\Lambda$-direction in Step 4 works uniformly for a whole neighbourhood of $(\rho_0,u_0,m_0)$.

Conversely, assume that $|m_0-\tfrac{1}{2}\rho_0u_0|=\tfrac{1}{2}(1-\rho_0^2)$, so that
$$
m_0=\tfrac{1}{2}\rho_0u_0+\tfrac{1}{2}(1-\rho_0^2)e_0\,\textrm{ with }|e_0|=1.
$$
We know from Step 2 of the proof of Proposition \ref{p:hull2} that there is a $\Lambda$-segment in $\partial U_\M$ of length $\min(1-\rho_0,1+\rho_0)$ centered at $(\rho_0,u_0,m_0)$, but now we need to show that the length of the segment can be chosen uniformly large for a whole neighbourhood. In other words we need to show that this $\Lambda$-segment is {\it stable}. 

To this end consider $z=(\rho,u,m)\in\partial U_\M$ nearby, so that in particular
$$
m=\tfrac{1}{2}\rho u+\tfrac{1}{2}(1-\rho^2)e+\tilde m\,\textrm{ with }|e|=1,
$$
with $|\rho-\rho_0|,|u-u_0|,|e-e_0|,|\tilde m|$ small. We may also assume that $|m-\tfrac{1}{2}\rho u|<\tfrac{1}{2}(1-\rho^2)$. Let 
$$
\bar{z}=\left(1,e,\tfrac{1}{2}e-\tfrac{1}{1-\rho}(m-\tfrac{1}{2}u)\right)
$$
as in Step 4 of the proof of Proposition \ref{p:hull2} and consider $z_t=(\rho_t,u_t,m_t)=m+t\bar{m}$. By continuity we can choose $e\in S^1$ in such a way that 
\begin{equation}\label{e:e}
\left(\rho,u,\tfrac{1}{2}\rho u+\tfrac{1}{2}(1-\rho^2)e\right)\in \partial U. 
\end{equation}
Since $\M>2$, such $e\in S^1$ exists for any $\rho,u$. Moreover, since $\M>2$, the circles defined by \eqref{e:4}-\eqref{e:5} are strictly larger than the one defined by \eqref{e:3}. 
It follows that either \eqref{e:e} is satisfied by all $e\in S^1$, or the circles defined by \eqref{e:4}-\eqref{e:5} intersect the circle defined by \eqref{e:3} transversely. In particular we may choose $e$ and consequently $\tilde m$ so that
\begin{equation}\label{e:angle}
-e\cdot \tilde m\geq \delta|\tilde m|,
\end{equation}
where $\delta>0$ depends only on $\M>2$. 

Now, since $e$ satisfies \eqref{e:e}, $(\rho_t,u_t)$ will satisfy \eqref{e:2} for all $t\in (-(1+\rho),1-\rho)$ (c.f. Step 2 in the proof of Proposition \ref{p:hull2}). Thus, using \eqref{e:identity} we see that $z_t$ remains inside $\partial U_\M$ as long as \eqref{e:3} remains a strict inequality. Now, a short calculation shows that
$$
m_t-\tfrac{1}{2}\rho_tu_t=\tfrac{1}{2}e(1-\rho_t^2)+\frac{1-\rho_t}{1-\rho}\tilde m.
$$
In particular $|m_t-\tfrac{1}{2}\rho_tu_t|=\tfrac{1}{2}(1-\rho_t^2)$ if either $\rho_t=1$ or $\rho_t$ satisfies
$$
(1+\rho_t)(1-\rho)e\cdot \tilde m=-|\tilde m|^2.
$$
Using \eqref{e:angle} we deduce that whenever $|\rho-\rho_0|,|u-u_0|,|e-e_0|,|\tilde m|$ is sufficiently small, $z_t\in \partial U_\M$ for all $t$ with $|t|<1/2\min(1-\rho_0,1+\rho_0)$.
This concludes the proof. 

\end{proof}

Using the same argument as in the proof of Theorem \ref{t:1} but replacing $K,U$ by $K_\M,U_\M$ with $\M>2$, 
we obtain the following strengthening (corresponding to Theorem 5.2 of \cite{CFG}):

\begin{theorem}\label{t:2}
There exist infinitely many periodic weak solutions to \eqref{e:IPM1}-\eqref{e:IPM3}
with $\rho,v\in L^{\infty}$, $|\rho(x,t)|=1$ for a.e. $(x,t)\in \T^2\times [0,T]$ and compact support in time. 
\end{theorem}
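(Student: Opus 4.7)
The plan is to run the Baire-category argument from the proof of Theorem \ref{t:1} verbatim, but with the unbounded pair $K, U$ replaced by the compact pair $K_\M, U_\M$ for some fixed $\M > 2$. The point is that any state in $K_\M$ automatically satisfies $|u| \le \M$, so if we can produce weak solutions $(\rho, u, m)$ of \eqref{e:linear} with $(\rho, u, m) \in K_\M$ a.e., then $|\rho| = 1$, $m = \tfrac{1}{2}\rho u$, and the velocity $v = \tfrac{1}{2}(u - (0,\rho))$ is automatically in $L^\infty$ with $\|v\|_{L^\infty} \le \tfrac{\M + 1}{2}$. Thus, replacing $K$ by $K_\M$ is the only ingredient that needs to change.

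Concretely, fix $\M > 2$, set $\D = \T^2 \times \R$ and $\O = \T^2 \times (0,T)$, and define
$$
X_0 = \Bigl\{z = (\rho, u, m) \in C^\infty_c(\O) : \eqref{e:linear} \text{ holds and } z(x,t) \in U_\M \text{ for all } (x,t) \in \O\Bigr\},
$$
extended by zero to $\D$. Since $(0,0,0) \in U_\M$, $X_0$ is nonempty. I then verify the hypotheses (H1)-(H3) of Theorem \ref{t:A1}. Boundedness (H3) is immediate because $U_\M$ is a bounded subset of state space, so $X_0$ is uniformly bounded in $L^\infty(\D)$, in particular in $L^2_{x,t}(\D)$. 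Localized plane waves (H2) are furnished by Lemma \ref{l:waves}. The perturbation property (H1) is exactly the content of Proposition \ref{p:perturbation2}: for every $z \in U_\M$ at distance $\alpha > 0$ from $K_\M$ there exists a unit direction $\bar z \in \Lambda$ along which $z$ can be moved by a uniform amount $\beta(\alpha, \M) > 0$ without exiting $U_\M$. Applying Theorem \ref{t:A1} then produces a residual, hence uncountable, family of $z \in \bar{X}_0$ taking values in $K_\M$ almost everywhere; each such $z = (\rho, u, m)$ yields, via $v := \tfrac{1}{2}(u - (0,\rho))$, a weak solution of \eqref{e:IPM1}-\eqref{e:IPM3} with $\rho, v \in L^\infty$, $|\rho| = 1$ a.e.\ on $\T^2 \times (0,T)$, and compact support in time (the support condition passes to weak-$L^2$ limits of functions vanishing outside $\overline{\O}$).

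The main obstacle is the quantitative perturbation property (H1), and this is precisely what Proposition \ref{p:perturbation2} delivers. The subtle point there, already foreshadowed in the proof of Proposition \ref{p:hull2}, is that the $\Lambda$-segments appearing in Steps 2 and 4 must have lengths that are uniformly bounded below in a whole neighbourhood of an arbitrary point of $\partial U_\M$. The condition $\M > 2$ enters because it makes the circles defined by \eqref{e:4}-\eqref{e:5} strictly larger than the circle defined by \eqref{e:3}, so that, in a neighbourhood of a boundary point where \eqref{e:3} is an equality, the direction $e$ can be chosen so as to guarantee the transversality estimate \eqref{e:angle}; without $\M > 2$ the perturbation amplitudes would degenerate at some points of $\partial U_\M$, destroying the Baire argument and hence also the $L^\infty$ bound on $v$.
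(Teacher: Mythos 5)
Your proposal is correct and is exactly the paper's argument: the paper disposes of Theorem~\ref{t:2} in a single sentence by replacing $K,U$ with $K_\M,U_\M$ for $\M>2$ and invoking Proposition~\ref{p:perturbation2} in place of Proposition~\ref{p:perturbation1} within the proof of Theorem~\ref{t:1}. One small slip: you have swapped the labels (H1) and (H2) from the Appendix --- in the paper (H1) is the localized plane-wave property supplied by Lemma~\ref{l:waves} and (H2) is the uniform perturbation property supplied by Proposition~\ref{p:perturbation2} --- but the substance of the verification is right.
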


\section{Evolution of Microstructure}\label{s:micro}

In this section we show how to construct solutions to \eqref{e:IPM1}-\eqref{e:IPM4} in 
the spatial domain 
$$
\Omega:=(-1,1)\times(-1,1)
$$ 
which exhibit the type of 
mixing behaviour that is expected (\cite{CG,Otto1,Otto2}), when one starts with a horizontal interface with the heavier fluid on top. 
Thus, let 
\begin{equation}\label{e:unstable}
\rho_0(x)=\begin{cases} +1&x_2>0\\ -1& x_2<0\end{cases}
\end{equation}
and define subsolutions as follows.

Let
\begin{equation*}
\D=\Omega\times (0,T).
\end{equation*}
We consider triples $(\rho,v,m)\in L^{\infty}(\D)$ such that for all $\phi\in C_c^{\infty}(\overline{\Omega}\times [0,T))$
\begin{equation}
\int_0^T\int_{\Omega}\partial_t\phi\rho+\nabla\phi\cdot m\,dxdt+\int_{\Omega}\phi(x,0)\rho_0(x)\,dx=0,\label{e:a1}
\end{equation}
\begin{align}
\int_{\Omega}v\cdot\nabla\psi\,dx&=0\qquad\forall\psi\in C^{\infty}(\overline{\Omega}),\label{e:a2}\\
\int_{\Omega}(v+(0,\rho))\cdot\nabla^{\perp}\psi\,dx&=0\qquad\forall\psi\in C_c^{\infty}(\Omega),\label{e:a3}
\end{align}
and
\begin{equation}
|m-\rho v+\tfrac{1}{2}(0,1-\rho^2)|\leq \tfrac{1}{2}(1-\rho^2)\,\textrm{ in }\D.\label{e:a4}
\end{equation}
We assume that there exists an open subset $\O\subset\D$ such that 
\begin{align}
&|\rho|=1\textrm{ a.e. $\D\setminus\O$},\label{e:a5}\\
(\rho,v,m)&\textrm{ is continuous in }\O,\label{e:a6}\\
|m-\rho v+&\tfrac{1}{2}(0,1-\rho^2)|< \tfrac{1}{2}(1-\rho^2)\,\textrm{ in }\O.\label{e:a7}
\end{align}
\begin{definition}
Let us call any $(\rho,v,m)\in L^{\infty}(\D)$ satisfying \eqref{e:a1}-\eqref{e:a7} an {\it admissible subsolution} and the corresponding subset $\O\subset\D$ the {\it mixing zone}.
\end{definition}

Next, given an admissible subsolution $(\overline{\rho},\overline{v},\overline{m})$, let
\begin{equation*}
\begin{split}
X_0=\Bigl\{(\rho,u,m)\in L^{\infty}(\D):\,(\rho,v,m)&=(\overline{\rho},\overline{v},\overline{m})\textrm{ a.e. $\D\setminus \O$}\\
&\textrm{ and satisfies \eqref{e:a6}-\eqref{e:a7}}\textrm{ in $\O$}\Bigr\}.
\end{split}
\end{equation*}
As in the proof of Theorem \ref{t:1}, the space $X_0$ is bounded in $L^2(\D)$, hence weak $L^2$ convergence is metrizable on $X_0$. Let $X$ be the closure of $X_0$ with respect to the induced metric. 

\begin{theorem}\label{t:3}
There exists a residual set in $X$ consisting of weak solutions $(\rho,v)$ to \eqref{e:IPM1}-\eqref{e:IPM4} in $\Omega\times(0,T)$ with initial condition $\rho_0$.
\end{theorem}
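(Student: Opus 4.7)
The plan is to implement the Baire category framework of Theorem \ref{t:A1} with the space $X$ of admissible subsolutions playing the role of the complete metric space, and with a functional measuring deviation from $|\rho|=1$ playing the role of the quantity forced to vanish on a residual set. First I would verify that $(X,d)$ is a complete metric space when $d$ induces the weak $L^2$ topology on bounded sets. Boundedness of $X_0$ in $L^2(\D)$ follows exactly as in the proof of Theorem \ref{t:1}: $|\rho|\leq 1$ together with elliptic estimates from \eqref{e:a2}--\eqref{e:a3} and the pointwise bound \eqref{e:a4} controls $v$ and $m$. The linear constraints \eqref{e:a1}--\eqref{e:a3} pass to weak limits by linearity, while \eqref{e:a4} persists by the div-curl argument behind the stability of $K^\Lambda$ in Section \ref{s:relaxations}. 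Hence $X$ is weakly closed and bounded, and in particular compact and complete in the induced metric.

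Next I would introduce
$$
I(\rho,v,m):=\int_{\O}(1-\rho^2)\,dx\,dt,
$$
which is non-negative on $X$ and weakly upper semicontinuous, hence Baire-$1$. A classical theorem therefore produces a residual set $\mathcal G\subset X$ of continuity points of $I$. On $\mathcal G$ I claim $I\equiv 0$; granting this, $|\rho|=1$ a.e.\ in $\O$, and combined with $|\rho|=1$ a.e.\ in $\D\setminus\O$ it forces \eqref{e:a4} to collapse to the equality $m=\rho v$, so that \eqref{e:a1}--\eqref{e:a3} become exactly the weak formulation of \eqref{e:IPM1}--\eqref{e:IPM3} with the correct initial datum $\rho_0$ and the correct boundary behaviour.

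The heart of the proof is therefore the following perturbation estimate: for every $(\rho,v,m)\in X_0$ with $I(\rho,v,m)>0$ there is a sequence in $X_0$ converging to it weakly in $L^2$ but with uniformly strictly smaller $I$-values. I would build it by combining Proposition \ref{p:perturbation1}, which at each interior subsolution point supplies a unit wave-cone direction $\bar z\in\Lambda$ along which the open condition is preserved for $|t|<c(1-\rho^2)$, with the localized plane waves of Lemma \ref{l:waves}. Summing such waves over finitely many disjoint space-time balls covering a subregion of $\O$ where $1-\rho^2$ is bounded below yields $(\rho_j,v_j,m_j)\in X_0$ with $(\rho_j,v_j,m_j)-(\rho,v,m)\rightharpoonup 0$ and $\int_{\O}(\rho_j-\rho)^2\,dx\,dt\gtrsim I(\rho,v,m)^2$ (by Cauchy--Schwarz applied to the pointwise size constraint $|t|<c(1-\rho^2)$). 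Expanding the square gives
$$
\limsup_{j\to\infty} I(\rho_j,v_j,m_j)\leq I(\rho,v,m)-c_0\,I(\rho,v,m)^2,
$$
a uniform drop which contradicts continuity of $I$ at $(\rho,v,m)$. A standard diagonal extension from $X_0$ to its weak closure $X$ then handles general points of $X$.

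The main obstacle is the quantitative form of this perturbation step: one has to turn the qualitative direction provided by Proposition \ref{p:perturbation1} into a genuine $L^2$-gain of order $I^2$, uniformly on the region where $\rho^2$ is bounded away from $1$, while keeping the modified triple in $X_0$ --- i.e., preserving \eqref{e:a1}--\eqref{e:a3} exactly, the strict pointwise inequality \eqref{e:a7}, and the agreement with $(\overline{\rho},\overline{v},\overline{m})$ outside $\O$. This is precisely the content of hypotheses (H1)--(H3) in the appendix and is where the explicit geometry of $K^\Lambda$ computed in Section \ref{s:relaxations} is used most heavily.
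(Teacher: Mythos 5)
Your proposal follows the same Baire-category strategy as the paper, but with one substantive modification: you take the functional forced to vanish on a residual set to be $I(\rho,v,m)=\int_{\O}(1-\rho^2)\,dx\,dt$, whereas the paper simply verifies hypotheses (H1)--(H3) and invokes Theorem \ref{t:A1}, whose proof works with the full quadratic functional $I(z)=\int_{\O}|z|^2\,dy$. Your choice is appealing: once $|\rho|=1$ a.e., the constraint \eqref{e:a4} degenerates to $m=\rho v$, so the remaining part of the inclusion $z\in K$ comes for free, and one is spared from tracking $m$ and $v$ separately in the Baire argument.

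But this substitution opens a genuine gap. The key claim $\int_{\O}(\rho_j-\rho)^2\,dx\,dt\gtrsim I(\rho,v,m)^2$ needs the localized plane-wave perturbations to carry a $\rho$-component of definite size, comparable to $c(1-\rho^2)$. Proposition \ref{p:perturbation1} does not guarantee this: in the case $|m-\tfrac{1}{2}\rho u|<\tfrac{1}{4}(1-\rho^2)$ the direction supplied is $(\bar\rho,\bar u,\bar m)=(0,0,\bar m)$, which leaves $\rho$ untouched. A perturbation purely in the $m$-component can have the prescribed $L^2$-mass while leaving $\int_{\O}(1-\rho^2)$ unchanged, so continuity of your $I$ at such a state is not contradicted. (Even in the other case of Proposition \ref{p:perturbation1}, after normalizing $|\bar\rho|^2+|\bar m|^2=1$ the $\bar\rho$-component degenerates as $|u|\to\infty$, and $U$ is unbounded in $u$ here; so a uniform lower bound on $\bar\rho$ would also need to be justified.) The paper's functional $\int_{\O}|z|^2$ avoids both issues because it registers the oscillation in $m$ as well as in $\rho$. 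To salvage your route you would need a strengthened perturbation lemma producing, at every point of $U$, a $\Lambda$-direction with $\bar\rho$ bounded below independently of $u$; the simpler repair is to keep $I(z)=\int_{\O}|z|^2$, apply Lemma \ref{l:A1}/Theorem \ref{t:A1} verbatim to conclude $z\in K$ a.e.\ in $\O$, and only then read off $|\rho|=1$ and $m=\rho v$ --- which is exactly what the paper does, after checking (H1) via Lemma \ref{l:waves}, (H2) via Proposition \ref{p:perturbation1}, and (H3) via the observation that \eqref{e:a7} is the shifted statement $(\rho,u,m+(0,\tfrac{1}{2}))\in U$.
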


\begin{proof}
Recall from \eqref{e:u} that $v=\tfrac{1}{2}(u-(0,\rho))$, so that \eqref{e:a7} is equivalent to
\begin{equation*}
\left(\rho,u,m+(0,\tfrac{1}{2})\right)\in U\quad\textrm{ for all }(x,t)\in\O,
\end{equation*}
where $U$ is defined in \eqref{e:U}. Therefore $X_0$ satisfies condition (H3) in the Appendix. Moreover, (H1) and (H2) are satisfied by Lemma \ref{l:waves} and
Proposition \ref{p:perturbation1}. Consequently Theorem \ref{t:3} follows from Theorem \ref{t:A1}.
\end{proof}

\begin{remark}\label{r:bounded}
If we replace \eqref{e:a4} and \eqref{e:a7} by the full set of inequalities \eqref{e:1}-\eqref{e:5} in Proposition \ref{p:hull2} for some $\M>2$,
and replace $U$ by $U_{\M}$ in the proof above, then Theorem \ref{t:3} remains true with the added information that the weak solutions satisfy $v\in L^{\infty}(\D)$. 
 \end{remark}
 
 \begin{remark}
 As a consequence of the residuality in $X$ we obtain, given an admissible subsolution $(\overline{\rho},\overline{v},\overline{m})$, the existence of a sequence of weak solutions $(\rho_k,v_k)$ of \eqref{e:IPM1}-\eqref{e:IPM4} such that
$$
\rho_k=\overline{\rho}\textrm{ a.e. in }\D\setminus \O\quad\textrm{ and }\quad\rho_k\overset{*}{\rightharpoonup} \overline{\rho}\textrm{ in }L^\infty(\O)
$$
as $k\to \infty$. In other words $\overline{\rho}$ represents a kind of coarse-grained density. This justifies calling $\O$ the mixing zone.
\end{remark}

\bigskip

To conclude this section we exhibit examples of (nontrivial) admissible subsolutions.
In particular we set
$$
\overline{v}\equiv 0,\quad \overline{m}=(0,\overline{m}_2),
$$
and assume that $(\overline{\rho},\overline{v},\overline{m})$ are just functions of $t$ and the ''height'' $x_2$, i.e.
$$
\overline{\rho}=\overline{\rho}(x_2,t),\quad \overline{m}_2=\overline{m}_2(x_2,t).
$$
Then \eqref{e:a2} and \eqref{e:a3} are automatically satisfied, whereas \eqref{e:a1} can be written as
\begin{align}
\partial_t\overline{\rho}+\partial_2\overline{m}_2&=0\quad\textrm{ in }(-1,1)\times(0,T),\label{e:sub1}\\
\overline{m}_2&=0\quad\textrm{ for }x_2=\pm 1,\label{e:sub2}\\
\overline{\rho}&=\begin{cases}-1&x_2<0\\ +1&x_2>0\end{cases}\quad\textrm{ for }t=0,\label{e:sub3}
\end{align}
interpreted in the weak formulation.
Furthermore, \eqref{e:a4} becomes
\begin{equation}\label{e:sub4}
-(1-\overline{\rho}^2)\leq \overline{m}_2\leq 0\quad\textrm{ in }(-1,1)\times(0,T).
\end{equation}
Note also in connection with Remark \ref{r:bounded} that in this case 
\eqref{e:2},\eqref{e:4} and \eqref{e:5} follow automatically from \eqref{e:sub4} provided $\gamma>3$.
 
An obvious way to construct admissible subsolutions is then to prescribe
$$
\overline{m}_2=-\alpha(1-\overline{\rho}^2)
$$
for $\alpha\in (0,1)$. The resulting equation from \eqref{e:sub1} is essentially the inviscid Burgers' equation, with the (unique) entropy solution
given by
\begin{equation}\label{e:subsol}
\overline{\rho}(x_2,t)=\begin{cases} -1& x_2<-2\alpha t,\\ \frac{x_2}{2\alpha t}&|x_2|<2\alpha t,\\ +1&x_2>2\alpha t\end{cases} 
\end{equation}
for times $t<1/(2\alpha)$. The corresponding mixing zone is
$$
\O=\{(x,t)\in\D:\,|x_2|<\alpha t\}.
$$
Since at $t=1/(2\alpha)$ we have $|\overline{\rho}|<1$ for all $x_2\in(-1,1)$, the functions $\overline{\rho},\overline{m}$ can easily 
be extended continuously to later times so that  \eqref{e:sub1}-\eqref{e:sub4} continue to hold.

It is interesting to note that in the borderline case $\alpha=1$ the subsolution $\overline{\rho}$ is precisely
the unique solution of the relaxation approach to the problem \eqref{e:IPM1}-\eqref{e:IPM4} introduced by F.~Otto in \cite{Otto1}. 
Thus, although weak solutions are clearly not unique, there seems to be a way to recover uniqueness 
at least for subsolutions. We plan to explore further this connection elsewhere.
Here we contend ourselves with showing that among all subsolutions, for which $\overline{\rho}$ is a function of $t$ and the vertical direction $x_2$ only, 
the case $\alpha=1$ in \eqref{e:subsol} corresponds to the one with ''maximal mixing''. This gives a new interpretation to the results in \cite{Otto1}.

\begin{proposition}
Let $(\overline{\rho},\overline{v},\overline{m})$ be an admissible subsolution to the unstable initial condition \eqref{e:unstable} such that $\overline{\rho}=\overline{\rho}(x_2,t)$. 
Then the mixing zone $\O$ is contained in $\{(x,t):|x_2|<2t\}$. 
\end{proposition}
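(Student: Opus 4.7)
The plan is an entropy-type argument for the one-dimensional scalar conservation law satisfied by $\tilde\rho:=1-\overline\rho$ (with $\tilde\rho\ge 0$ by $|\overline\rho|\le 1$) and flux $\tilde m:=-\overline m_2\ge 0$ (by \eqref{e:sub4}). Because $\overline v\equiv 0$, $\overline m_1\equiv 0$, and $\overline\rho,\overline m_2$ depend only on $(x_2,t)$, the identity \eqref{e:a1} collapses (after integrating out $x_1$) to the one-dimensional weak form
\begin{equation*}
\int_0^T\!\!\int_{-1}^1\bigl(\partial_t\phi\,\tilde\rho+\partial_2\phi\,\tilde m\bigr)\,dx_2\,dt+\int_{-1}^1\phi(x_2,0)\bigl(1-\rho_0(x_2)\bigr)\,dx_2=0
\end{equation*}
for all $\phi\in C_c^{\infty}([-1,1]\times[0,T))$, where test functions need not vanish at $x_2=\pm 1$ (which encodes \eqref{e:sub2}). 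The critical pointwise inequality, coming directly from the lower bound in \eqref{e:sub4}, is
\begin{equation*}
2\tilde\rho-\tilde m\;=\;\tilde\rho^2+\bigl[\overline m_2+(1-\overline\rho^2)\bigr]\;\ge\;\tilde\rho^2\;\ge\;0,
\end{equation*}
which is the entropy inequality expressing that the propagation speed of $\tilde\rho$ is bounded by $2$.

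Next I would fix $\tau\in(0,T)$, a parameter $\epsilon>0$, a smooth non-decreasing cutoff $\chi_\epsilon\colon\R\to[0,1]$ with $\chi_\epsilon\equiv 0$ on $(-\infty,0]$ and $\chi_\epsilon\equiv 1$ on $[\epsilon,\infty)$, and $\eta_\delta\in C_c^{\infty}([0,T))$ approximating $\mathbf{1}_{[0,\tau]}$ with $\eta_\delta(0)=1$. Substituting $\phi(x_2,t):=\eta_\delta(t)\chi_\epsilon(x_2-2t)$ into the displayed weak form, using $\partial_t\phi=\eta_\delta'\chi_\epsilon-2\eta_\delta\chi_\epsilon'$, $\partial_2\phi=\eta_\delta\chi_\epsilon'$, noting $\chi_\epsilon(\cdot)(1-\rho_0(\cdot))\equiv 0$ (disjoint supports), and passing $\delta\to 0$, one arrives for a.e.\ $\tau$ at
\begin{equation*}
\int_{-1}^1\chi_\epsilon(x_2-2\tau)\tilde\rho(x_2,\tau)\,dx_2\;=\;-\int_0^{\tau}\!\!\int_{-1}^1\chi_\epsilon'(x_2-2t)(2\tilde\rho-\tilde m)\,dx_2\,dt.
\end{equation*}
The left-hand side is non-negative while the right-hand side is non-positive by the displayed algebraic inequality (together with $\chi_\epsilon'\ge 0$), so both vanish, and letting $\epsilon\to 0^+$ yields $\overline\rho\equiv 1$ a.e.\ on $\{x_2>2t\}$. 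The mirror test function $\phi(x_2,t):=\eta_\delta(t)\chi_\epsilon(-x_2-2t)$ applied to the conservation law for $(1+\overline\rho,\overline m_2)$ gives the symmetric conclusion $\overline\rho\equiv-1$ a.e.\ on $\{x_2<-2t\}$.

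For the final inclusion, suppose some $(x_0,t_0)\in\O$ had $|x_{0,2}|\ge 2t_0$; then openness of $\O$ together with openness of $\{|x_2|>2t\}$ would provide a non-empty open set $V\subset\O\cap\{|x_2|>2t\}$ on which $|\overline\rho|<1$ pointwise (by \eqref{e:a7}) yet $|\overline\rho|=1$ a.e.\ (by the preceding step), and the continuity \eqref{e:a6} upgrades the a.e.\ equality to pointwise equality on $V$, a contradiction. The main technical obstacle I foresee is the passage $\delta\to 0$ to pin the integrated identity at a fixed time $\tau$, which requires selecting the weak-$*$ continuous-in-time representative of the $L^\infty$ function $\tilde\rho$; this is standard for 1D scalar conservation laws with bounded flux, but is the one place where a purely formal ``$\frac{d}{dt}E(t)\le 0$'' argument needs careful justification.
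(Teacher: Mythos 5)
Your proof is correct and takes essentially the same approach as the paper: test the weak conservation law \eqref{e:a1} against a cutoff translating at speed $2$ and use the pointwise inequality $\overline m_2+2-2\overline\rho\geq(1-\overline\rho)^2$ --- equivalent to your $2\tilde\rho-\tilde m\geq\tilde\rho^2$ --- which comes from the lower bound $\overline m_2\geq-(1-\overline\rho^2)$ implied by \eqref{e:a4} once one notes that $\overline\rho=\overline\rho(x_2,t)$ together with \eqref{e:a2}--\eqref{e:a3} forces $\overline v\equiv 0$. The only differences are cosmetic: the paper plugs the ramp $\chi(s)=\max(s,0)$ into \eqref{e:a1} directly and reads the conclusion off a single region integral over $\{x_2>2t\}$, whereas you mollify in space and time to get a time-sliced identity; also, your stated premises $\overline m_1\equiv 0$ and $\overline m_2=\overline m_2(x_2,t)$ are not actually consequences of the hypothesis $\overline\rho=\overline\rho(x_2,t)$, but this is harmless since only the $x_1$-average of $\overline m_2$ ever enters the computation and the pointwise inequality survives averaging.
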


\begin{proof}
We start by observing that since $\partial_{1}\overline{\rho}=0$, \eqref{e:a2}-\eqref{e:a3} imply that $\overline{v}=0$. 
Next, let
$$
\chi(s)=\begin{cases}0&s<0,\\s&s>0,\end{cases}
$$
and consider the test function $\phi(x,t)=\chi(x_2-2 t)$ in \eqref{e:a1} (it is easy to see by approximation that this is a valid test function). 
We obtain
$$
\int\int_{\{(x,t)\in\D:\,x_2>2t\}}\overline{m}_2-2\overline{\rho}\,dxdt+\int_{\{x\in\Omega:\,x_2>0\}}x_2\rho_0(x)dx=0.
$$ 
Therefore
$$
\int\int_{\{(x,t)\in\D:\,x_2>2t\}}(\overline{m}_2-2\overline{\rho}+2)\,dxdt=0.
$$
On the other hand \eqref{e:a4} implies
\begin{equation}\label{e:m2ineq}
\overline{m}_2\geq -(1-\overline{\rho}^2).
\end{equation}
We deduce that
$$
\overline{\rho}=1\textrm{ a.e. }(x,t)\in \{x_2>2t\}.
$$ 
Analogously we also find 
$$
\overline{\rho}=-1\textrm{ a.e. }(x,t)\in \{x_2< -2t\}.
$$ 
\end{proof}

 \section{Appendix}
 
 We consider general systems in a domain $\D\subset\R^d$ of the form
 \begin{eqnarray}
 \sum_{i=1}^dA_i\partial_iz=0&&\textrm{ in }\D\label{e:LR}\\
 z(y)\in K&&\textrm{ a.e. }y\in\D\label{e:CR}
\end{eqnarray}
where
$$
z:\D\subset\R^d\to\R^N
$$
is the unknown state variable, $A_i$ are constant $m\times N$ matrices, and $K\subset\R^N$ is a closed 
set. We make the following assumptions.

\bigskip 

{\bf (H1)} The Wave Cone: There exists a closed cone $\Lambda\subset\R^N$ and a constant $C$ such that for all $\bar{z}\in\Lambda$ there exists a sequence $w_j\in C_c^{\infty}(B_1(0);\R^N)$ solving \eqref{e:LR} such that
\begin{enumerate}
\item $\dist(w_j,[-\bar{z},\bar{z}])\to 0$ uniformly,
\item $w_j\rightharpoonup 0$ weakly in $L^2$,
\item $\int |w_j|^2dy> C|\bar{z}|^2$.
\end{enumerate}

\bigskip

{\bf (H2)} The $\Lambda$-convex hull: There exists an open set $U\subset\R^N$ with $U\cap K=\emptyset$, and such that for all $z\in U$ with $\dist(z,K)\geq \alpha>0$ there exists $\bar{z}\in\Lambda\cap S^{N-1}$ such that
\begin{equation}
z+t\bar{z}\in U\textrm{ for all }|t|<\beta,
\end{equation}
where $\beta=\beta(\alpha)>0$. 

\bigskip

{\bf (H3)} Subsolutions: $X_0$ is a nonempty bounded subset of $L^2(\D)$ consisting of functions which are ''perturbable'' in an open subdomain $\O\subset\D$. This means that 
any $z\in X_0$ is continuous on $\O$ with
\begin{equation}\label{e:CRprime}
z(y)\in U\quad\textrm{ for }y\in\O,
\end{equation} 
and moreover, if $z\in X_0$ and $w\in C_c(\O)$ such that $w$ solves \eqref{e:LR} and $(z+w)(y)\in U$ for all $y\in\O$, then 
$z+w\in X_0$.

\bigskip

Finally, let $X$ be the closure of $X_0$ with respect to the weak $L^2$ topology. Since $X_0$ is bounded, the topology of weak $L^2$ convergence is metrizable on $X$, making it into a complete metric space. Denote its metric by $d_X(\cdot,\cdot)$.

\begin{theorem}\label{t:A1}
Assuming (H1)-(H2), the set 
$$
\{z\in X:\,z(y)\in K\textrm{ a.e. }y\in\O\}
$$
is residual in $X$. 
\end{theorem}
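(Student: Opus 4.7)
The plan is the standard convex-integration Baire category argument. Define the functional
\[
J(z):=\int_{\O}|z(y)|^2\,dy,
\]
which is bounded on $X$ (since $X_0$, and hence $X$, is bounded in $L^2(\D)$) and lower semicontinuous with respect to $d_X$, because the $L^2$-norm is weakly lower semicontinuous. A bounded lower semicontinuous function on a complete metric space is of Baire class one (for instance, as the pointwise supremum of its Lipschitz regularisations), and the classical Baire theorem gives that its set of continuity points $Y\subset X$ is a dense $G_{\delta}$, hence residual in $X$. Since the set in the conclusion contains $Y$, it suffices to verify the claim that every $z^{\ast}\in Y$ satisfies $z^{\ast}(y)\in K$ for a.e.\ $y\in\O$.

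Suppose for contradiction that some $z^{\ast}\in Y$ violates this, so that $\dist(z^{\ast}(\cdot),K)\geq 2\alpha$ on a set $E\subset\O$ with $|E|\geq\eps$ for some $\alpha,\eps>0$. Pick any $z_k\in X_0$ with $d_X(z_k,z^{\ast})\to 0$; continuity of $J$ at $z^{\ast}$ upgrades the weak convergence $z_k\rightharpoonup z^{\ast}$ to strong $L^2(\O)$ convergence (weak convergence together with convergent norms in a Hilbert space). As $\dist(\cdot,K)$ is $1$-Lipschitz, $\dist(z_k,K)\to\dist(z^{\ast},K)$ in measure, so for large $k$ the set $E_k:=\{y\in\O:\dist(z_k(y),K)\geq\alpha\}$ has measure at least $\eps/2$. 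Using that $z_k$ is continuous on $\O$, I would cover a portion of $E_k$ of total measure at least $\eps/4$ by finitely many disjoint closed balls $B_i\subset\O$ on which $z_k$ is nearly constant with value $\xi_i\in U$ and $\dist(\xi_i,K)\geq \alpha/2$. Hypothesis (H2) then yields for each $i$ a direction $\bar z_i\in\Lambda\cap S^{N-1}$ and a threshold $\beta=\beta(\alpha/2)>0$ with $\xi_i+t\bar z_i\in U$ for $|t|<\beta$, and (H1), applied in localised rescaled form inside $B_i$ with amplitude $\beta/2$, produces plane waves $w_{i,j}\in C^{\infty}_c(B_i)$ solving \eqref{e:LR} with $w_{i,j}\rightharpoonup 0$ and $\|w_{i,j}\|^2_{L^2}\geq C\beta^2|B_i|$. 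Openness of $U$ together with the closeness of $z_k$ to $\xi_i$ on $B_i$ ensures $z_k+w_{i,j}\in U$ pointwise on $B_i$.

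Setting $w^{(j)}:=\sum_i w_{i,j}$ (disjoint supports) gives a single compactly supported perturbation solving \eqref{e:LR} with $z_k+w^{(j)}\in U$ pointwise on $\O$; by (H3) this lands in $X_0$. Choosing $j=j_k\to\infty$ fast enough, the sequence $z'_k:=z_k+w^{(j_k)}\in X$ satisfies $d_X(z'_k,z^{\ast})\to 0$ because $z_k\to z^{\ast}$ strongly while $w^{(j_k)}\rightharpoonup 0$. Expanding
\[
J(z'_k)=J(z_k)+2\langle z_k,w^{(j_k)}\rangle_{L^2(\O)}+\|w^{(j_k)}\|^2_{L^2(\O)},
\]
the cross term vanishes in the limit (weakly null $w^{(j_k)}$ tested against the strongly convergent $z_k$), while $\|w^{(j_k)}\|^2_{L^2}\geq c(\alpha,\eps)>0$ independently of $k$ and $j$. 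Therefore $\liminf_k J(z'_k)\geq J(z^{\ast})+c>J(z^{\ast})$, contradicting continuity of $J$ at $z^{\ast}$.

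The main obstacle is the perturbation step: one must assemble a single \eqref{e:LR}-solving perturbation that is simultaneously (i) pointwise small enough to keep $z_k+w^{(j)}$ inside $U$, (ii) $L^2$-bounded below by a constant independent of the oscillation parameter, and (iii) weakly null so that the cross term disappears in the limit. Item (i) is routine, given the openness of $U$, the continuity of $z_k$, and the disjointness of the $B_i$; items (ii) and (iii) are precisely the quantitative content of (H1), which is the engine that makes the whole Baire scheme generate a definite gain in $J$ per step and therefore forces $z^{\ast}$ into $K$ a.e.\ on $\O$.
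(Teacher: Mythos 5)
Your proof is correct and follows the same Baire category scheme as the paper: pass to the residual set $Y$ of continuity points of the $L^2(\O)$--norm functional, then show that any $z^*\in Y$ not landing in $K$ a.e.\ can be perturbed to produce a contradiction with continuity. Two minor technical variations from the paper's proof are worth recording. First, you establish that $J$ is Baire class one via weak lower semicontinuity, whereas the paper mollifies and exhibits $J$ as a pointwise limit of continuous $I_j$; both are standard. Second, and more substantively, you quantify the failure of $z^*(y)\in K$ by the measure of a level set $\{\dist(z^*,K)\geq 2\alpha\}$ and propagate this to the approximating sequence $z_k\in X_0$ via convergence in measure (using the Lipschitz continuity of $\dist(\cdot,K)$ and the strong $L^2(\O)$ convergence coming from continuity of $J$ at $z^*$). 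This lets you invoke a single uniform segment length $\beta(\alpha/2)$ from (H2) and obtain the uniform lower bound on $\|w^{(j_k)}\|_{L^2}^2$ directly. The paper instead runs the perturbation lemma under the weaker hypothesis $\int_{\O}\min(1,\dist(z,K))\geq\eps$ and therefore needs the convexified modulus $\phi$ from (H2) together with Jensen's inequality; your reformulation sidesteps that step. The remaining elements — selection of finitely many disjoint balls compactly contained in $\O$ on which the subsolution is nearly constant, use of (H1) to produce localized rescaled waves that stay inside $U$ and carry a definite $L^2$ mass, superposition and application of (H3), and the diagonal choice of $j_k$ so that the perturbed sequence still converges to $z^*$ in $d_X$ while gaining a fixed amount of $L^2$ energy — match the paper's argument.
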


The proof relies on the following lemma, where, for notational convenience we set
$$
F(z):=\min(1,\dist(z,K)).
$$
\begin{lemma}\label{l:A1}
Let $z\in X_0$ with $\int_{\O}F(z(y))dy\geq\eps>0$. For all $\eta>0$ there exists $\tilde z\in X_0$ with $d_X(z,\tilde z)<\eta$ and $\int_{\O}|z-\tilde z|^2dy\geq\delta$, where $\delta=\delta(\eps)>0$.
\end{lemma}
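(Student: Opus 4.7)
The plan is to perturb $z$ on a sizeable portion of $\O$ by a superposition of disjointly supported localized plane waves supplied by (H1), in directions given by (H2). Small $d_X$ distance will come from the weak convergence $w_j\rightharpoonup 0$, and the $L^2$ lower bound will come from the quantitative third property in (H1).

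\textbf{Step 1: extract a good set.} Since $F\leq 1$ and $\int_{\O}F(z)\,dy\geq\eps$, a Chebyshev-type inequality gives $\alpha_0=\alpha_0(\eps)>0$ and a measurable set $A\subset\O$ with $|A|\geq\eps/2$ and $\dist(z(y),K)\geq\alpha_0$ for every $y\in A$. Using (H2) we obtain $\beta_0:=\beta(\alpha_0/2)>0$ such that whenever $z'\in U$ satisfies $\dist(z',K)\geq\alpha_0/2$, there is $\bar z'\in\Lambda\cap S^{N-1}$ with $z'+t\bar z'\in U$ for $|t|<\beta_0$.

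\textbf{Step 2: localize via continuity.} Because every element of $X_0$ is continuous on $\O$, standard covering arguments (e.g.\ a Vitali-type cover) produce finitely many disjoint closed balls $\overline{B_{r_k}(y_k)}\subset\O$, with $r_k$ so small that on each ball $z$ oscillates by at most $\beta_0/4$, and with $\sum_k|B_{r_k}(y_k)|\geq |A|/2\geq\eps/4$. On each such ball let $z_k:=z(y_k)$; then $\dist(z_k,K)\geq\alpha_0/2$, and (H2) yields a corresponding $\bar z_k\in\Lambda\cap S^{N-1}$.

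\textbf{Step 3: insert localized waves.} For each $k$, apply (H1) to $\tfrac{\beta_0}{2}\bar z_k$, and rescale/translate so that the resulting sequence $w_{k,j}\in C_c^{\infty}(B_{r_k}(y_k);\R^N)$ solves \eqref{e:LR}, satisfies $\dist(w_{k,j},[-\tfrac{\beta_0}{2}\bar z_k,\tfrac{\beta_0}{2}\bar z_k])\to 0$ uniformly as $j\to\infty$, converges weakly to $0$ in $L^2$, and satisfies
\[
\int|w_{k,j}|^2\,dy\,\geq\,C\bigl(\tfrac{\beta_0}{2}\bigr)^2|B_{r_k}(y_k)|.
\]
Because the supports are disjoint, $w_j:=\sum_k w_{k,j}$ is a well-defined solution of \eqref{e:LR} in $C_c(\O)$.

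\textbf{Step 4: membership and conclusion.} For $j$ large, uniformly $|w_{k,j}(y)|\leq \tfrac{3}{4}\beta_0$ on its support, and by the choice of $r_k$ we have $|z(y)-z_k|\leq\beta_0/4$. Thus $z(y)+w_j(y)=z_k+(z(y)-z_k)+w_{k,j}(y)$ differs from $z_k$ by less than $\beta_0$ in the direction $\bar z_k$ up to a small transverse error; a slightly more careful version (decreasing $r_k$ and shrinking the amplitude by a factor close to $1$) ensures $z+w_j$ takes values in $U$ on $\O$. Hence $\tilde z:=z+w_j\in X_0$ by (H3). Since $w_j\rightharpoonup 0$ in $L^2$ and $d_X$ metrizes weak convergence on the bounded set $X$, taking $j$ sufficiently large gives $d_X(z,\tilde z)<\eta$. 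Meanwhile
\[
\int_{\O}|z-\tilde z|^2\,dy\,=\,\sum_k\int|w_{k,j}|^2\,dy\,\geq\,C\bigl(\tfrac{\beta_0}{2}\bigr)^2\sum_k|B_{r_k}(y_k)|\,\geq\,\delta(\eps),
\]
which is the required lower bound.

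\textbf{Main obstacle.} The delicate point is Step 4: one must absorb into $U$ simultaneously the variation of the background $z$ across each ball and the amplitude of the oscillation $w_{k,j}$, which is only approximately in the admissible direction $\bar z_k$. This is handled quantitatively by first fixing $\alpha_0,\beta_0$ from (H2), then choosing the ball radii so that $z$ varies by less than, say, $\beta_0/8$, and finally using (H1) with amplitude strictly below $\beta_0$ so that $U$-membership is preserved uniformly in $j$.
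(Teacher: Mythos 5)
Your proof is correct and follows the same overall strategy as the paper: cover a substantial portion of $\O$ by small disjoint balls, insert on each ball a rescaled localized plane wave from (H1) pointing in a $\Lambda$-direction supplied by (H2), use the weak convergence for the $d_X$ bound, and the quantitative third property of (H1) for the $L^2$ lower bound. The one place you diverge is in how the lower bound is extracted: you threshold via a Chebyshev argument, obtaining a fixed $\alpha_0(\eps)$, a set $A\subset\O$ of measure $\geq\eps/2$ on which $\dist(z,K)\geq\alpha_0$, and then a single uniform $\beta_0=\beta(\alpha_0/2)$ for all the balls; the paper instead encodes (H2) in a continuous modulus $\phi$ with $\phi(0)=0$, $\phi(t)>0$ for $t>0$, replaces $\phi$ by its convexification, and then uses Jensen's inequality over the covering balls weighted by $|B_i|/m$ to pass from the pointwise bounds $\phi(F(z(y_i)))$ to a lower bound in terms of $\int_{\O}F(z)\,dy$. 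Your variant is slightly more elementary and avoids the convexification step; the paper's version is a little slicker and produces the cleaner closed form $\delta\geq\tfrac{C}{2|\O|}\phi\bigl(\tfrac{\eps}{4|\O|}\bigr)$. Both yield a valid $\delta(\eps)>0$, and both handle the final ``absorption into $U$'' step in the same way, by combining the compactness of the segment around $z(y_k)$ with the openness of $U$ and then shrinking the ball radii and wave amplitude accordingly.
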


\begin{proof}[Proof of Lemma \ref{l:A1}]
Since $z\in X_0$ is continuous and $z(y)\notin K$ on $\O$, for any $y_0\in\O$ there exists $r_0=r_0(y_0)>0$ such that
\begin{equation}\label{e:A1}
\frac{1}{2}F(z(y_0))\leq F(z(y))\leq 2\,F(z(y_0))
\end{equation}
for all $y\in B_{r_0}(y_0)\subset\O$.
Then, by a simple domain exhaustion argument, we find disjoint balls 
$B_i:=B_{r_i}(y_i)\subset\O$ for $i=1\dots k$ such that
\begin{eqnarray}
m:=\biggl|\bigcup_{i=1}^kB_i\biggr|&\geq& \frac{1}{2}|\O|,\label{e:A2}\\
\sum_i\int_{B_i}F(z(y))\,dy&\geq& \frac{1}{2}\int_{\O}F(z(y))\,dy\label{e:A3}.
\end{eqnarray}

Next, observe that (H2) implies the existence of a continuous function $\phi:[0,1]\to[0,1]$ such that
$\phi(0)=0$, $\phi(t)>0$ for $t>0$ and such that
for any $z\in U$ there exists $\bar{z}\in \Lambda\cap S^{N-1}$ with
\begin{equation}\label{e:A4}
z+t\bar{z}\in U\quad\textrm{ whenever }|t|<\phi\bigl(F(z(y))\bigr).
\end{equation}
By considering its convexification if necessary, we may assume without loss of generality that $\phi$ is convex and monotone increasing.
Using \eqref{e:A1}, \eqref{e:A3} and the convexity of $\phi$ we obtain
\begin{equation*}
\begin{split}
\phi\biggl(\frac{1}{4|\O|}\int_{\O} F(z(y))dy\biggr)&\leq  \phi\biggl(\sum_i F(z(y_i))\frac{|B_i|}{m}\biggr)\\
&\leq \sum_i \phi\bigl(F(z(y_i))\bigr)\frac{|B_i|}{m}.
\end{split}
\end{equation*}
Moreover, using \eqref{e:A4}, (H1) and a simple rescaling, there exist $\tilde z_i\in C_c^{\infty}(B_i;\R^N)$ such that
\begin{enumerate}
\item $z(y)+\tilde z_i(y)\in U$ for all $y$, 
\item $d(\tilde z_i,0)<\eta/k$,
\item $\int_{B_i}|\tilde z_i|^2dy> C\phi\bigl(F(z(y_i))\bigr)|B_i|$,
\end{enumerate}
where $C$ is the constant in (H1).
Therefore,
$$
w:=\sum_i\tilde z_i\in C_c(\O)    
$$  
and $z(y)+w(y)\in U$ for all $y\in\O$, hence $\tilde z:=z+w\in X_0$ by (H3).  
Moreover, $\tilde z\in X_0$ satisfies
\begin{eqnarray*}
d_X(\tilde z,z)&\leq& \sum_id_X(\tilde z_i,0)<\eta\\
\int_{\O}|z-\tilde z|^2dy&\geq& C\sum_i\phi\bigl(F(z(y_i))\bigr)|B_i|\\
&\geq& \frac{C}{2|\O|}\phi\biggl(\frac{1}{4|\O|}\int_{\O}F(z(y))dy\biggr).
  \end{eqnarray*}
 This concludes the proof. 
 \end{proof}
 
 \begin{proof}[Proof of Theorem \ref{t:A1}]
 First of all the functional $I(z)=\int_{\O}|z(y)|^2\,dy$ is a Baire-1 function on $X$. Indeed, observe that
  $$
 I_j(z):=\int_{\{y\in\O:\dist(y,\partial\O)>1/j\}}|z*\rho_j(y)|^2dy,
 $$
 where $\rho_j\in C_c^{\infty}(B_{1/j}(0))$ is the usual mollifier sequence, is continuous as a map $X\to\R$, and moreover
 $$
 I_j(z)\to I(z)\textrm{ as }j\to\infty.
 $$
 Therefore, by the Baire category theorem the set 
 $$
 Y:=\{z\in X:\,I\textrm{ is continuous at }z\}
 $$
 is residual in $X$. We claim that $z\in Y$ implies $\int_{\O}F(z(y))dy=0$. 
 
 If not, let $\eps:=\int_{\O}F(z(y))dy>0$ for some $z\in Y$, and let $z_j\in X_0$ be a sequence such that
 $z_j\overset{d}{\to} z$ in $X$. Since $I$ is continuous at $z$, it follows that $z_j\to z$ strongly in $L^2(\O)$, and in particular we may assume that $\int_{\O}F(z_j(y))dy>\eps/2$. 
 
 Then, by applying Lemma \ref{l:A1} to each $z_j$, we obtain a new sequence $\tilde z_j\in X_0$ such that
 $\tilde z_j\overset{d}{\to} z$ in $X$ (and hence strongly in $L^2$), but $\int_{\O}|z_j-\tilde z_j|^2dy\geq \delta>0$, where $\delta$ only depends on $\eps$. This contradicts the strong convergence of both sequences to $z$. 
 
 \end{proof}


\end{document}